\documentclass[sn-mathphys,Numbered]{sn-jnl}


\usepackage{graphicx}%
\usepackage{multirow}%
\usepackage{amsmath,amssymb,amsfonts}%
\usepackage{amsthm}%
\usepackage{mathrsfs}%
\usepackage[title]{appendix}%
\usepackage{xcolor}%
\usepackage{textcomp}%
\usepackage{manyfoot}%
\usepackage{booktabs}%
\usepackage{algorithm}%
\usepackage{algorithmicx}%
\usepackage{algpseudocode}%
\usepackage{listings}%
\newcommand{\nn}{\nonumber}




\theoremstyle{thmstyleone}%
\newtheorem{theorem}{Theorem}
%

\theoremstyle{thmstyletwo}%
\newtheorem{example}{Example}%

\theoremstyle{thmstylethree}%
\newtheorem{definition}{Definition}%

\raggedbottom

\begin{document}

\title[Double interdiction problem]{Double interdiction problem on trees on the sum of root-leaf distances by upgrading edges }


\author[1]{\fnm{Xiao} \sur{Li}}\email{alt.xiaoli@gmail.com}

\author*[1]{\fnm{Xiucui} \sur{Guan}}\email{xcguan@163.com}

\author[1]{\fnm{Junhua} \sur{Jia}}\email{230218184@seu.edu.cn}
\author[2]{\fnm{Panos M.} \sur{Pardalos}}\email{pardalos@ise.ufl.edu}

\affil*[1]{\orgdiv{School of Mathematics}, \orgname{Southeast University}, \orgaddress{\street{No. 2, Sipailou}, \city{Nanjing}, \postcode{210096}, \state{Jiangsu Province}, \country{China}}}

\affil[2]{\orgdiv{Center for Applied Optimization}, \orgname{University of Florida}, \orgaddress{\street{Weil Hall}, \city{Gainesville}, \postcode{32611}, \state{Florida}, \country{USA}}}


\abstract{
The double interdiction problem on trees (DIT) for the sum of root-leaf distances (SRD) has significant implications in diverse areas such as transportation networks, military strategies, and counter-terrorism efforts. It aims to maximize the SRD  by upgrading edge weights subject to two constraints. One gives an upper bound for the cost of upgrades under certain norm and the other specifies a lower bound for the shortest root-leaf distance (StRD). We utilize both weighted $l_\infty$ norm and Hamming distance to measure the upgrade cost and denote the corresponding (DIT) problem by (DIT$_{H\infty}$) and its minimum cost problem by (MCDIT$_{H\infty}$).  We establish the $\mathcal{NP}$-hardness of problem (DIT$_{H\infty}$) by building a reduction from the 0-1 knapsack problem.
 We solve the problem (DIT$_{H\infty}$) by two scenarios based on the number $N$ of upgrade edges. When $N=1$, a greedy algorithm with $O(n)$ complexity is proposed. For the general case, an exact dynamic programming algorithm within a pseudo-polynomial  time is proposed, which is established on a structure of left subtrees by maximizing a convex combination of the StRD and SRD. 
 Furthermore, we confirm the $\mathcal{NP}$-hardness of problem (MCDIT$_{H\infty}$) by reducing from  the 0-1 knapsack problem. To tackle problem (MCDIT$_{H\infty}$), a binary search algorithm with pseudo-polynomial  time complexity is outlined, which iteratively solves problem (DIT$_{H\infty}$). We culminate our study with numerical experiments, showcasing effectiveness of the algorithm.
}

\keywords{Network interdiction problem, upgrade critical edges, Shortest path, Sum of root-leaf distance, Dynamic programming algorithm.}



\maketitle

\section{Introduction}\label{sec1}

The landscape of terrorism has experienced marked transformation widespread use of drones. The enduring nature of this menace has precipitated continued episodes of violence in the subsequent years, compelling governments to adopt proactive strategies to forestall future calamities. A pivotal component of counterterrorism initiatives is the interception of terrorist transportation channels. Nevertheless, considering the constraints on available resources, it is essential for policymakers to prioritize and target specific transportation routes to achieve the most significant disruption. Within this paradigm, the Network Interdiction Problem (NIP), grounded in game theory, presents an insightful framework for judicious decision-making.

Generally, the NIP encompasses two main players: a leader and a follower, each propelled by distinct, often opposing, agendas. The follower seeks to maximize their goals by adeptly navigating the network to ensure the efficient transit of pivotal resources, such as supply convoys, or by augmenting the amount of material conveyed through the network. Conversely, the leader's ambition is to obstruct the follower's endeavors by strategically compromising the network's integrity.

Network Interdiction Problems that involve the deletion of edges (NIP-DE) are strategies aimed at impeding network performance by removing a set of $K$ critical edges. These strategies are pivotal in diverse fields, including transportation \cite{Network flow}, counterterrorism \cite{r2_terrorist, r7_Khachi}, and military network operations \cite{r7_Khachi}.

Significant scholarly effort has been invested in exploring NIP-DE across a spectrum of network challenges. These encompass, but are not confined to, the StRD \cite{r4_Corley_and_Sha, r5_Bar--Noy, r6_shortest path, r7_Khachi}, minimum spanning tree \cite{r11_spanning_tree, r12_spanning_tree, r13_spanning_tree, r14_spanning_tree, r15_spanning_tree}, maximum matching \cite{r16_maximum_matching, r17_maximum_matching, r18_maximum_matching, r19_maximum_matching}, maximum flow \cite{r20_maximum_flow, r21_maximum_flow}, and center location problems \cite{r23_location}.

Pioneering work by Corley and Sha in 1982 \cite{r4_Corley_and_Sha} introduced the notion of edge deletion in NIP to prolong the StRD within a network. Subsequent research by Bar-Noy in 1995 \cite{r5_Bar--Noy} established the $\mathcal{NP}$-hard nature of this problem for any $K$. Later, Khachiyan et al. (2008) \cite{r7_Khachi} demonstrated the impossibility of achieving a 2-approximation algorithm for NIP-DE. Bazgan and colleagues, in their 2015 and 2019 studies \cite{Bazgan2015, Bazgan2019}, proposed an $O(mn)$ time algorithm for incrementing path lengths by 1, and further solidified the $\mathcal{NP}$ -hard status for increments greater than or equal to 2.

Despite theoretical advances, practical application of critical edge or node deletion remains challenging. To address these practical limitations, Zhang et al. (2021) \cite{Zhang_SPIT} introduced an upgraded framework for NIP that focuses on edge upgrades. They explored this concept through the Shortest Path Interdiction Problem (SPIT) and its variant, the Minimum Cost SPIT (MCSPIT), on tree graphs. For SPIT, an $O(n^2)$ primal-dual algorithm was provided under the weighted $l_1$ norm, with the complexity improved to $O(n)$ for the unit $l_1$ norm. They extended their investigation to unit Hamming distance, designing algorithms with complexities $O(N + l \log l)$ and $O(n(\log n + K^3))$ for $K=1$ and $K>1$, respectively \cite{Zhang_MSPH}. Subsequently, Lei et al. (2023) \cite{Lei} enhanced these to $O(n)$ and $O(nK^2)$ time complexities.

In a recent study, Li et al. (2023) \cite{Xiao} addressed the sum of root-leaf distances (SRD) interdiction problem on trees with cardinality constraint by upgrading edges (SDIPTC), and its related minimum cost problem (MCSDIPTC). Utilizing the weighted $l_\infty$ norm and the weighted bottleneck Hamming distance, they proposed two binary search algorithms with both time complexities of $O(n \log n)$ for problems (SDIPTC) and two binary search algorithms within O($N n^2$) and O($n \log n$) for problems (MCSDIPTC), respectively.  However, these introductive problems did not limit the shortest root-leaf  distance (StRD), which makes the upgrade scheme lack of comprehensiveness and rationality. To remedy this, we introduce the double interdiction problem on the sum of root-leaf distance by upgrading edges that have restriction both on the SRD and StRD.

Specifically, certain advanced transportation networks can be visualized as rooted trees.\cite{Zhang_SPIT} In this model, the root node denotes the primary warehouse, the child nodes signify intermediary transit points, and the leaf nodes portray the ultimate users or terminals. Serving as the leader in this scenario, our aim is to proficiently impede and neutralize this network. The corresponding challenges are articulated as follows.

Let $T=(V,E,w,u,c)$ be an edge-weighted tree rooted at $s$, where $V:=\left \{ s,v_1,v_2,\dots,v_n \right \} $ and
$E:=\left \{ e_1,e_2,\dots,e_n \right \}$ are the sets of nodes and edges, respectively.
Let $Y:=\left \{ t_1,t_2,\dots,t_m \right \}$ be the set of leaf nodes and $S(v):=\{ v'|v'  \text{ is the son of } v \}$.
Let $w(e)$ and $u(e)$ be the original weight and upper bound of upgraded weight for
edge $e\in E$, respectively,
where $u(e)\geq w(e) \geq 0$.
Let $c(e)>0, r(e) \in \mathbb{Z}
 >0$ be the unit modification cost of edge $e\in E$ for $l_\infty$ norm and weighted sum Hamming distance, respectively. Let $P_k:=P_{t_k}:=P_{s,t_k}$ be the root-leaf path from the root node $s$ to the leaf node $t_k$.
Let $w(P_k):=\sum_{e\in P_{k}}w(e)$ and $w(T):=\sum_{t_k\in Y}w(P_k)$ be the weight of the path $P_k$ and SRD under the edge weight $w$, respectively. Let $In_{\bar w}(e)$ represent the increment of SRD by edge $e$ under edge weight vector $\bar w.$

Given two values $K$ and $M$, the double interdiction problem on trees on the sum of root-leaf distance by upgrading edges  (\textbf{DIT}) aims to maximize SRD by determining an edge weight vector $\tilde{w}$ such that the modification cost $\|\tilde{w}-w\|$ in a certain norm does not exceed $K$, and the StRD from root $s$ to any leaf node must not be less than $M$. The mathematical representation of this problem can be articulated as follows.
\begin{eqnarray}
	&\max \limits_{\tilde{w}}& \tilde{w}(T):=\sum_{t_k \in Y}\tilde{w}(P_k)\nonumber\\
       \textbf { (DIT)} & s.t. & \min \limits_{t_k \in Y} \tilde{w}(P_k)\ge M  \label{DITH}\\
	 &&\ \| \tilde{w}-w\|  \leq K,\nonumber\\
	&& w(e) \leq \tilde{w}(e) \leq u(e), \quad e \in E.\nonumber
\end{eqnarray}

Its related minimum cost problem (MCDIT) by exchanging its objective function and the modification cost can be formulated as follows.

\begin{eqnarray}
	&\min \limits_{\tilde{w}} & C(\tilde{w}):=\ \| \tilde{w}-w\| \nonumber  \\
       \textbf {(MCDIT) }  &s.t.& \min \limits_{t_k \in Y} \tilde{w}(P_k)\ge M \label{MCDITH} \\
	 && \tilde{w}(T) \geq D, \nonumber\\
	&&w(e) \leq \tilde{w}(e) \leq u(e), \quad e \in E\nonumber.
\end{eqnarray}
In practical applications, several norms are employed to quantify the cost of modifications, notably including the \(l_1\) norm, \(l_\infty\) norm, bottleneck Hamming distance, and weighted sum Hamming distance. Each of these norms finds extensive use in various domains. For instance, the \(l_\infty\) norm is instrumental in ensuring that traffic on a single network link does not surpass its maximum capacity at any given time, thereby preventing congestion and optimizing network traffic distribution issues \cite{Network flow}. Similarly, the weighted sum Hamming distance is applied to manage the number of interfaces within an optical network, as discussed in \cite{Optical networks}. Under certain specific conditions, both the \(l_\infty\) norm and weighted sum Hamming distance are utilized to assess costs, particularly when modifications involve more than one resource. However, existing research has overlooked the scenario involving the simultaneous application of both \(l_\infty\) and weighted sum Hamming distances. This paper aims to address this gap by measure the upgrade cost with both \(l_\infty\) and weighted sum Hamming distances, as described below.
\begin{eqnarray}
	&\max \limits_{\tilde{w}} &\tilde{w}(T):= \sum_{t_k \in Y}{\tilde{w}}(P_k)\nonumber\\
	\textbf{(DIT$_{H\infty}$)} & s.t. &  \min \limits_{t_k \in Y} \tilde{w}(P_k)\ge M, \label{DITH_Model} \\ 
         && \max_{\ e\in E}c(e)(\tilde{w}(e)-w(e))\le K, \nonumber\\
	&&\sum_{e\in E} r(e)H(\tilde{w}(e),w(e))\le N, \nonumber\\ 
	&& w(e) \leq \tilde{w}(e) \leq u(e), \quad e \in E.\nonumber
\end{eqnarray}
where $H(\tilde{w}(e),w(e))=\begin{cases}
	0, &{\tilde{w}}(e)=w(e)\\
	1,&{\tilde{w}}(e)\ne w(e)\\
\end{cases}\nonumber 
$ is the Hamming distance between 
$\tilde{w}(e)$ and $w(e)$ and $N$ is a given     positive value.

Its related minimum cost problem obtained by exchanging the $l_\infty$ norm cost and the SRD objective function can be written as 
\begin{eqnarray}
    &\min \limits_{\tilde{w}}& C(\tilde{w}) := \max_{e\in E}c(e) (\tilde w(e) - w(e)) \nonumber  \\
    (\mathbf{MCDIT}_{H\infty})  &\text{s.t.}& \min \limits_{t_k \in Y} \tilde{w}(P_k) \ge M, \label{MCDITH_inf}\\
        && \sum_{t_k \in Y} \tilde{w}(P_k) \geq D, \nonumber\\
        && \sum_{e\in E} r(e)H(\tilde{w}(e), w(e)) \le N, 
 \nonumber \\
        && w(e) \leq \tilde{w}(e) \leq u(e), \quad e \in E. \nonumber
\end{eqnarray}

The structure of the paper is organized as follows: Section 2 establishes the $\mathcal{NP}$-hardness of the problem (DIT$_{H\infty}$) by demonstrating a reduction from the 0-1 knapsack problem. In Section 3, we introduce a dynamic programming algorithm \ref{Alg_PIC_inf} to solve the problem (DIT$_{H\infty}$), albeit with pseudo-polynomial  time complexity. Moving to Section 4, the paper delves into proving the $\mathcal{NP}$-hardness of the minimum cost problem (MCDIT$_{H\infty}$) through a two-step process. In Section 5, we address the problem (MCDIT$_{H\infty}$) by employing a binary search algorithm which iteratively calling Algorithm \ref{Alg_PIC_inf}. Section 6 is dedicated to presenting the outcomes of computational experiments, which affirm the efficiency and accuracy of the proposed algorithms. The paper concludes in Section 7, where we summarize the key findings and outline potential directions for future research in this domain.

\section{The $\mathcal{NP}$-Hardness of problem (DIT$_{H\infty}$)}

When the weighted $l_\infty$ norm and weighted sum Hamming distance  is applied to the upgrade cost, the problem (DIT$_{H\infty}$) is formulated in (\ref{DITH_Model}). Note that the weighted sum Hamming distance is discrete, posing challenges in its treatment. To gain a clearer understanding of problem (DIT$_{H\infty}$), we initially examine its relaxation (DIT$_\infty$) by removing the constraint of weighted sum Hamming distance. Its mathematical model can be outlined as follows.
\begin{eqnarray}
	&\max \limits_{\tilde{w}} &\tilde{w}(T):= \sum_{t_k \in Y}{\tilde{w}}(P_k)\nonumber\\
	\textbf{(DIT$_\infty$)} & s.t. & \min \limits_{t_k \in Y} \tilde{w}(P_k)\ge M,  \label{DIT_Model} \\ 
         && \max_{\ e\in E}c(e)(\tilde{w}(e)-w(e))\le K, \nonumber\\
	&& w(e) \leq \tilde{w}(e) \leq u(e), \quad e \in E.\nonumber
\end{eqnarray}

In the problem (\textbf{DIT$_\infty$}), we can maximize the weight of each edge to the greatest extent possible under the constraint of cost $K$ and the upper bound $u(e)$  as follows

\begin{equation}
    \bar w(e)=\min \left   \{ w(e)+ \frac{K}{c(e)},u(e) \right \}(e\in E) .\label{DIT_inf_ans} 
\end{equation}

If, in this scenario, the weight of the StRD remains less than M, expressed as $\min \limits_{t_k \in Y} \bar{w}(P_k) < M$, it implies problem is infeasible with the following theorem.
\begin{theorem} \label{DIT_ans_theorem}
Let $\bar{w}$ be defined as shown in Equation (\ref{DIT_inf_ans}). If $\min \limits_{t_k \in Y} \bar{w}(P_k) < M$, then the problem (DIT$_\infty$) is infeasible. Otherwise, the solution $\bar w$  by Equation (\ref{DIT_inf_ans}) is optimal for the problem (DIT$_\infty$) and thus can be obtained in $O(n)$ time.
\end{theorem}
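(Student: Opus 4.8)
The plan is to argue in two parts, corresponding to the two assertions of the theorem. For the infeasibility direction, I would observe that $\bar w$ as defined in (\ref{DIT_inf_ans}) is, coordinatewise, the largest admissible edge-weight vector: it satisfies $w(e)\le\bar w(e)\le u(e)$ for every $e$, and it satisfies the $l_\infty$ cost bound $\max_{e\in E}c(e)(\bar w(e)-w(e))\le K$ because each term is either $c(e)\cdot\frac{K}{c(e)}=K$ or $c(e)(u(e)-w(e))\le K$. Hence any feasible $\tilde w$ of (DIT$_\infty$) necessarily satisfies $\tilde w(e)\le\bar w(e)$ for all $e$. Consequently, for every leaf $t_k$ we have $\tilde w(P_k)=\sum_{e\in P_k}\tilde w(e)\le\sum_{e\in P_k}\bar w(e)=\bar w(P_k)$, and therefore $\min_{t_k\in Y}\tilde w(P_k)\le\min_{t_k\in Y}\bar w(P_k)<M$, contradicting the StRD constraint (\ref{DIT_Model}). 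So no feasible solution exists.

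For the optimality direction, assume $\min_{t_k\in Y}\bar w(P_k)\ge M$. Then $\bar w$ itself is feasible: it meets the box constraints and the $l_\infty$ cost bound as just checked, and now also the StRD constraint by hypothesis. To see it is optimal, note that for any feasible $\tilde w$ the same coordinatewise domination $\tilde w(e)\le\bar w(e)$ gives $\tilde w(T)=\sum_{t_k\in Y}\tilde w(P_k)\le\sum_{t_k\in Y}\bar w(P_k)=\bar w(T)$. Hence $\bar w$ attains the maximum of the objective over the feasible region, i.e. it is optimal.

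Finally, for the complexity claim, computing $\bar w(e)=\min\{w(e)+K/c(e),u(e)\}$ takes $O(1)$ per edge and there are $n$ edges, so $\bar w$ is produced in $O(n)$ time; a single traversal of the tree accumulates all path weights $\bar w(P_k)$ and their minimum, also in $O(n)$, so the feasibility test and the returned solution are obtained in $O(n)$ total.

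I do not anticipate a genuine obstacle here — the argument is essentially the observation that the feasible region of (DIT$_\infty$) is an order ideal in the coordinatewise partial order with a unique componentwise maximum $\bar w$, and that the objective $\tilde w\mapsto\tilde w(T)$ is coordinatewise nondecreasing, so the maximum is attained at $\bar w$ whenever $\bar w$ is feasible and nowhere otherwise. The only point requiring a line of care is verifying that each coordinate of $\bar w$ respects the $l_\infty$ budget, which is immediate from the $\min$ in its definition; the rest is bookkeeping about path sums on the tree.
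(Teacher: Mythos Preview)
Your proposal is correct and follows the same approach as the paper: both argue that $\bar w$ is the coordinatewise maximal vector satisfying the box and $l_\infty$-cost constraints, so if it violates the StRD bound the problem is infeasible, and otherwise it is optimal because the objective is coordinatewise monotone. Your version is in fact more explicit than the paper's (you spell out the domination $\tilde w(e)\le\bar w(e)$ and the verification that $\bar w$ meets the $l_\infty$ budget), but the underlying idea is identical.
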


\begin{proof}
When $\min_{t_k \in Y} \bar{w}(P_k) < M$, and all edges have been adjusted to their maximum permissible values, it indicates that the StRD has reached its highest potential length. If, in such a scenario, the StRD still does not fulfill the specified constraints, it indicates that the problem cannot be solved. Conversely, if the StRD meets the constraints under these conditions, it signals the presence of a viable solution. And note that all edges are already at their upper limits, precluding any possibility for further enhancement, the maximal SRD is also achieved. For Equation (\ref{DIT_inf_ans}), we just upgrade every edges, which takes $O(n)$ time.
\end{proof}

To represent the SRD increment by upgrading one edge, we introduce the following definition.
\vspace{0.5em}
\begin{definition} \cite{Xiao}
	For any $ e \in E$, 
	define $L(e):=\left \{ t_{k}\in Y \vert e \in P_{s,t_k}\right \}$
	as the set of leaves $t_{k}$ to which $P_{s,t_k}$ passes through $e$.
	If $t_k \in L(e)$, then $t_k$ is controlled by the edge $e$. 
        Similarly, for any $v \in V$, define $L(v):=\left \{ t_{k}\in Y \vert v \in P_{s,t_k} \right \}$ as the set of leaves  controlled by the node $v$.
\end{definition}
\vspace{0.5em}

Using this definition, we know that the increment of SRD can be expressed as
\begin{equation} \label{equ_In(e)}
    In_{\bar w}(e):= |L(e)|(\bar w(e) -w(e)).
\end{equation}

Building upon the discussion before, we understand that under weighted $l_\infty$ norm, 
it is advantageous to upgrade an edge to its upper bound determined by Equation (\ref{DIT_inf_ans}). Consequently, the problem can be reformulated into a new 0-1 Integer Linear Programming model, where the binary decision variable $x(e)$ is assigned to each edge $e \in E$. An edge $e$ is considered upgraded or not if $x(e)=1$ or $0$.
 Thus, the original problem (\ref{DITH_Model})  can be transformed into the following general form.

\begin{subequations}\label{GDITH_Program}
\begin{align}
    & \max \; \sum_{e \in E} In_{\bar w}(e)x(e) & & \notag \\
   \text{\textbf{(GDIT$_{H\infty}$)}}     &  \; \text{s.t.}  \sum_{e \in P_k } (\bar{w}(e)-w(e)) x(e) \geq M -w(P_k), \forall k=1,\dots ,m. & & \label{GDITH_Program_a} \\
   &   \; \quad \; \; \sum_{e \in E} r(e)x(e) \leq N, & &  \label{GDITH_Program_b}\\
    & \quad \quad x(e)  \in \{ 0,1\}, e \in E. & & \notag 
\end{align}
\end{subequations}

Here, the objective function is transformed from $\tilde w(T)$ in problem (\ref{DITH_Model}) to the increment of SRD by upgrading edges as $\tilde w(T)-w(T)=\sum_{e \in E} In_{\tilde w}(e)=\sum_{e \in E} In_{\bar w}(e)x(e)$. Meanwhile, note that $\min_{t_k \in Y} \tilde{w}(P_k) \ge M$ in problem (\ref{DITH_Model}) is equivalent to $\tilde{w}(P_k) \ge M$ for any $t_k \in Y$, then the StRD constraint can be interpreted as (\ref{GDITH_Program_a}). Consequently, we arrive at the following theorem.
\begin{theorem} \label{GDITH_is_equal_to_DITH}
Problem \textbf{(GDIT\(_{H\infty}\))} is equivalent to the problem \textbf{(DIT\(_{H\infty}\))}.
\end{theorem}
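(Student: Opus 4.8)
The plan is to exhibit an explicit, essentially value-preserving correspondence between the feasible solutions of the two problems, exploiting the structural fact already used to derive (\ref{DIT_inf_ans}): under the weighted $l_\infty$ norm it is never disadvantageous to push an upgraded edge all the way up to $\bar{w}(e)$.

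First I would reduce the continuous problem (DIT$_{H\infty}$) to a ``$\{w(e),\bar{w}(e)\}$-valued'' problem. Fix any feasible $\tilde{w}$ and let $F=\{e\in E: \tilde{w}(e)>w(e)\}$ be its upgrade support. Replace $\tilde{w}$ by $\hat{w}$ with $\hat{w}(e)=\bar{w}(e)$ for $e\in F$ and $\hat{w}(e)=w(e)$ otherwise. Then: (i) the $l_\infty$ constraint stays satisfied, since $c(e)(\bar{w}(e)-w(e))\le K$ and $\bar{w}(e)\le u(e)$ by the very definition in (\ref{DIT_inf_ans}); (ii) the weighted sum Hamming cost does not increase, because the support of $\hat{w}-w$ is contained in $F$; (iii) each path weight $\tilde{w}(P_k)$ can only increase, so the StRD constraint $\tilde{w}(P_k)\ge M$ is preserved (it is a lower bound); and (iv) the objective $\tilde{w}(T)$ can only increase. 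Hence every feasible solution is dominated by one taking values in $\{w(e),\bar{w}(e)\}$; in particular an optimal solution of this form exists, and the two problems are simultaneously feasible or infeasible.

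Next I would set up the correspondence. For a $\{w(e),\bar{w}(e)\}$-valued feasible $\tilde{w}$ put $x(e)=1$ if $\tilde{w}(e)=\bar{w}(e)$ and $\bar{w}(e)\ne w(e)$, and $x(e)=0$ otherwise; conversely a binary vector $x$ yields $\tilde{w}(e)=w(e)+(\bar{w}(e)-w(e))x(e)$. Under this map the weighted sum Hamming constraint $\sum_{e\in E} r(e)H(\tilde{w}(e),w(e))\le N$ becomes exactly (\ref{GDITH_Program_b}) (edges with $\bar{w}(e)=w(e)$ contribute $0$ on both sides); and since $\tilde{w}(P_k)=w(P_k)+\sum_{e\in P_k}(\bar{w}(e)-w(e))x(e)$, the condition $\tilde{w}(P_k)\ge M$ for all $k$ is exactly (\ref{GDITH_Program_a}). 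For the objective, (\ref{equ_In(e)}) gives $In_{\tilde{w}}(e)=|L(e)|(\tilde{w}(e)-w(e))=In_{\bar{w}}(e)x(e)$, so $\tilde{w}(T)=w(T)+\sum_{e\in E} In_{\bar{w}}(e)x(e)$; since $w(T)$ is a fixed constant, maximizing $\tilde{w}(T)$ is equivalent to maximizing the objective of (GDIT$_{H\infty}$).

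Combining these three points, the feasible solutions of (GDIT$_{H\infty}$) are in value-preserving (up to the additive constant $w(T)$) correspondence with the $\{w(e),\bar{w}(e)\}$-valued feasible solutions of (DIT$_{H\infty}$), which by the first step contain an optimal solution of (DIT$_{H\infty}$); hence the two problems are equivalent. I expect the only delicate point to be the first step: arguing that ``saturating'' each upgraded edge to $\bar{w}(e)$ never destroys feasibility — harmless precisely because the StRD enters through a lower bound, the Hamming support cannot grow, and $\bar{w}(e)$ is by construction the largest weight compatible with the $l_\infty$ budget $K$ and the bound $u(e)$ — and never decreases the objective. The degenerate edges with $\bar{w}(e)=w(e)$ (e.g.\ when $u(e)=w(e)$) merely need a one-line aside, since for them upgrading is vacuous in both the cost and the objective, so forcing $x(e)=0$ loses nothing.
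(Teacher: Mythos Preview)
Your proposal is correct and follows essentially the same approach as the paper: both establish a two-way correspondence between feasible solutions of (DIT$_{H\infty}$) and (GDIT$_{H\infty}$) via the map $x(e)=1 \Leftrightarrow \tilde{w}(e)=\bar{w}(e)$. Your treatment is in fact more careful than the paper's, since you explicitly justify the saturation step (pushing each upgraded edge to $\bar{w}(e)$) and handle the degenerate case $\bar{w}(e)=w(e)$, whereas the paper's direction (ii) asserts that the associated $x_1$ has ``exactly the same objective value'' as the given $w_2$, which is only true after saturation.
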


\begin{proof}
For clarity, let us denote problem \textbf{(GDIT\(_{H\infty}\))} as \(Q_1\) and problem \textbf{(DIT\(_{H\infty}\))} as \(Q_2\). The theorem is established through the validation of the following two statements.

(i) For every feasible solution $x_1$ to \(Q_1\), there exists a feasible solution $w_2$ to \(Q_2\), with equal or higher objective value. By Theorem \ref{DIT_ans_theorem}, we can upgrade every edge $e$ with $x_1(e)=1$ to the upper bound $\bar w(e)$ defined by Equation (\ref{DIT_inf_ans}) to obtain a new solution $w_2$ of $Q_2$ defined below
\[
w_{2}(e)=\begin{cases}
\bar{w}(e), &  x_1(e)=1, \\
w(e), & \text{otherwise}.
\end{cases}
\]
Obviously, \(w_{2}\) satisfies all the constraints within \(Q_2\) and have the same SRD value. 

(ii) For every feasible solution $w_2$ to \(Q_2\), there exists a feasible solution $x_1$ to \(Q_1\) defined below
\[
x_{1}(e)=\begin{cases}
1, &  w_2(e) \ne w(e) , \\
0, & \text{otherwise}.
\end{cases}
\]
Then \(x_1\) have exactly the same objective value as \(w_2\), and (ii) is established.
\end{proof}

Using Theorem \ref{GDITH_is_equal_to_DITH}, we can show the $\mathcal{NP}$-hardness of problem (DIT$_{H\infty}$) by the following theorem.

\begin{theorem} \label{thm-DIT-NPH}
The problem \textnormal{(DIT$_{H\infty}$)} is \textnormal{$\mathcal{NP}$}-hard.
\end{theorem}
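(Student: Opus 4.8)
The plan is to reduce the decision version of the 0-1 knapsack problem to the decision version of \textbf{(DIT\(_{H\infty}\))}. By Theorem \ref{GDITH_is_equal_to_DITH} it suffices to realise an arbitrary knapsack instance as an instance of the 0-1 integer program \textbf{(GDIT\(_{H\infty}\))}, and then transfer the conclusion back to \textbf{(DIT\(_{H\infty}\))} via the value-preserving correspondence in that theorem. The point is that \textbf{(GDIT\(_{H\infty}\))} already ``contains'' a knapsack: the objective \(\sum_{e\in E}In_{\bar w}(e)x(e)\) is linear in the binary variables, and the constraint (\ref{GDITH_Program_b}) is exactly a knapsack capacity constraint; the only foreign ingredient is the family of StRD constraints (\ref{GDITH_Program_a}), which I will make vacuous by a suitable choice of topology and of \(M\).

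Concretely, start from a knapsack instance with items \(i=1,\dots,p\), positive integer weights \(b_i\), nonnegative values \(a_i\) (assume \(a_i\ge 1\) without loss of generality), capacity \(B\) and target \(A\): the question is whether some \(S\subseteq\{1,\dots,p\}\) satisfies \(\sum_{i\in S}b_i\le B\) and \(\sum_{i\in S}a_i\ge A\). Build the star \(T\) rooted at \(s\) with leaves \(t_1,\dots,t_p\) and edges \(e_i=(s,t_i)\), and set \(w(e_i)=0\), \(u(e_i)=a_i\), \(c(e_i)=1\), \(r(e_i)=b_i\), \(K=\max_i a_i\), \(N=B\), \(M=0\), and target SRD \(D=A\). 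Then by (\ref{DIT_inf_ans}), \(\bar{w}(e_i)=\min\{0+K/1,\,a_i\}=a_i\), so \(\bar{w}(e_i)-w(e_i)=a_i\); since \(|L(e_i)|=1\) for each leaf edge of a star, (\ref{equ_In(e)}) gives \(In_{\bar w}(e_i)=a_i\); and since \(w(P_k)=0=M\), every constraint in (\ref{GDITH_Program_a}) holds for every \(x\). Hence this \textbf{(GDIT\(_{H\infty}\))} instance is precisely: maximise \(\sum_i a_i x(e_i)\) subject to \(\sum_i b_i x(e_i)\le B\), \(x(e_i)\in\{0,1\}\). Its optimum is \(\ge A\) if and only if the knapsack instance is a YES instance.

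Finally, apply Theorem \ref{GDITH_is_equal_to_DITH}: on this instance the optimal value of \textbf{(DIT\(_{H\infty}\))} equals that of \textbf{(GDIT\(_{H\infty}\))}, so \textbf{(DIT\(_{H\infty}\))} admits a feasible \(\tilde{w}\) with \(\tilde{w}(T)\ge D=A\) exactly when the knapsack instance is a YES instance. The construction is clearly polynomial in the input size, and 0-1 knapsack is \(\mathcal{NP}\)-complete, so \textbf{(DIT\(_{H\infty}\))} is \(\mathcal{NP}\)-hard.

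I expect the only nonroutine step to be the handling of the StRD constraints (\ref{GDITH_Program_a}): one must pick the topology and parameters so that the ``\(\min\) root-leaf distance \(\ge M\)'' requirement imposes nothing on the choice of upgraded edges — which the star together with \(w\equiv 0\) and \(M=0\) achieves, since each \(w(P_k)\) already equals \(M\) before any upgrade. Everything else is bookkeeping: checking that the constructed data obey the standing assumptions (\(u(e)\ge w(e)\ge 0\), \(c(e)>0\), \(r(e)\in\mathbb{Z}_{>0}\), \(N>0\)) and that \(\bar{w}\) from (\ref{DIT_inf_ans}) attains the upper bound \(u(e_i)\). (If an ``active'' StRD constraint were desired, one could instead assign each edge a small positive original weight and set \(M\) slightly above it, but the vacuous version already suffices for the hardness claim.)
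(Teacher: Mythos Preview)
Your proof is correct and follows essentially the same approach as the paper: both reduce from 0-1 knapsack to \textbf{(GDIT$_{H\infty}$)} with $M=0$ so that the StRD constraints (\ref{GDITH_Program_a}) become vacuous, then invoke Theorem~\ref{GDITH_is_equal_to_DITH}. Your version is in fact more explicit than the paper's, which sets $In_{\bar w}(e_i):=p_i$, $r(e_i):=c_i$, $N:=R$, $M:=0$ without spelling out the underlying tree; your star construction with $w\equiv 0$ makes the reduction fully concrete.
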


\begin{proof}
To establish the \textnormal{$\mathcal{NP}$}-hardness of Problem \textnormal{(DIT$_{H\infty}$)}, we reduce from the well-known \textnormal{$\mathcal{NP}$}-hard \textnormal{0-1} knapsack problem, which is defined as follows:
\begin{eqnarray}
&\max \quad & \sum_{i=1}^{n} p_{i} x_{i} \nonumber\\
&\text{s.t.} \quad & \sum_{i=1}^{n} c_{i} x_{i} \le R, \label{eq:01knapsack}\\
&\quad & x_{i} \in \{0,1\}, i = 1, \ldots, n. \nonumber
\end{eqnarray}
where each $c_i$ is a positive integer and $R$ is a constant.

Given any instance $I_1$ of the 0-1 knapsack problem in \eqref{eq:01knapsack}, we construct an instance of problem (GDIT$_{H\infty}$) in (\ref{GDITH_Program}) as follows. Let $In_{\bar{w}}(e_i) := p_i, r(e_i) := c_i, N:=R$ and $M := 0$.
Furthermore, $x_i=1$ in an instance $I_1$ if and only if $x(e_i)=1$ in an instance of (GDIT$_{H\infty}$). 
This equivalence shows that problem \textnormal{(GDIT$_{H\infty}$)} is \textnormal{$\mathcal{NP}$}-hard, thereby establishing the \textnormal{$\mathcal{NP}$}-hardness of problem \textnormal{(DIT$_{H\infty}$)} by Theorem \ref{GDITH_is_equal_to_DITH}.
\end{proof}

\section{Solve the problem \textbf{(DIT\(_{H\infty}\))} }
When applying the $l_\infty$ norm and the weighted sum Hamming distance to the double interdiction problem, the complexity of the situation escalates significantly. This is due to the necessity to identify crucial edges that not only comply with the StRD constraint but also maximize the SRD value. To navigate the challenges presented by the integration of weighted sum Hamming distance, we introduce an enhanced dynamic programming approach. Our methodology begins with an exhaustive case-by-case examination. In particular, the scenario becomes markedly simpler when upgrading a single edge is permitted, serving as the initial case for our analysis. As the complexity of the scenarios increases with the general case, our focus shifts to formulating a dynamic programming objective function, denoted as $h$. This function employs a convex combination to simultaneously cater to the augmentation in SRD and the necessity to minimize the path length. Following this, we define a transition equation based on the structure of left subtrees, facilitating the execution of a dynamic programming iteration. Moreover, we implement the binary search technique iteratively to fine-tune the optimal parameters, ultimately leading to the determination of the optimal solution to the original problem.

\subsection{Solve problem (DIT$_{H\infty}$) when upgrading one edge}
We denote the problem (DIT$_{H\infty}$) by (DIT$_{H\infty}^1$) when $N=1$ and $r(e)=1$ for all $e \in E$, which aims to modify a single critical edge to maximize SRD while ensuring compliance with the StRD constraint. To approach this task, we initially set forth some necessary definitions. Subsequently, we employ a greedy algorithm to efficiently address the problem, achieving a solution in \(O(n)\) time complexity. 


\begin{definition}
	For any $e^* \in E$, we define the upgrade vector $w_{e^*}$ as follows.
\begin{equation}
    w_{e^*}(e)=
    \begin{cases}
        \bar{w}(e), & \text{if } e =e^* \\
        w(e), & \text{otherwise.}
    \end{cases}    
\end{equation} \nonumber
\end{definition}

In order to determine our next optimization goal, we need to consider whether the constraints required in the original situation are satisfied.
\begin{theorem}
    If $\min_{t_k \in Y}w(P_k) \ge M$, let $e^*:= \operatorname{arg\,max}_{e \in E} In_{w_e}(e)$, the optimal solution of the problem (DITH$_\infty^1$) is $w_{e^*}$.
\end{theorem}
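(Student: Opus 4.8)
The plan is to observe that the hypothesis $\min_{t_k\in Y} w(P_k)\ge M$ renders the StRD constraint vacuous, reduce the problem to selecting the single most profitable edge, and then invoke the monotonicity of $In$ in the modified edge weight. First I would note that any $\tilde w$ feasible for (DIT$_{H\infty}^1$) satisfies $\tilde w(e)\ge w(e)$ for every $e\in E$, hence $\tilde w(P_k)\ge w(P_k)\ge M$ for every leaf $t_k$; thus the constraint $\min_{t_k\in Y}\tilde w(P_k)\ge M$ is automatically met and may be discarded. What remains is to maximize $\tilde w(T)$ subject only to the budget $\max_{e\in E}c(e)(\tilde w(e)-w(e))\le K$, the bounds $w(e)\le\tilde w(e)\le u(e)$, and the Hamming constraint $\sum_{e\in E} r(e)H(\tilde w(e),w(e))\le N$, which with $N=1$ and $r\equiv 1$ permits changing at most one edge.

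Next I would fix the edge $e$ that a given feasible solution modifies (if any) and write $\tilde w(e)$ for its new weight. Feasibility forces $c(e)(\tilde w(e)-w(e))\le K$ and $\tilde w(e)\le u(e)$, hence $\tilde w(e)\le \bar w(e)=\min\{w(e)+K/c(e),\,u(e)\}$; conversely, setting $\tilde w(e)=\bar w(e)$ and leaving all other edges unchanged is feasible. Since $\tilde w(T)-w(T)=In_{\tilde w}(e)=|L(e)|(\tilde w(e)-w(e))$ is nondecreasing in $\tilde w(e)$, among all solutions that touch exactly the edge $e$ the objective is maximized by $\tilde w=w_e$, with value $w(T)+In_{w_e}(e)$; the solution that modifies no edge yields $w(T)$, which is dominated because $In_{w_e}(e)\ge 0$.

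Finally, maximizing over the choice of $e$, the optimal value of (DIT$_{H\infty}^1$) equals $w(T)+\max_{e\in E}In_{w_e}(e)=w(T)+In_{w_{e^*}}(e^*)$, attained by $\tilde w=w_{e^*}$, which is the assertion. I do not expect a substantive obstacle: the only points needing care are checking that upgrading $e^*$ to $\bar w(e^*)$ respects both the budget $K$ and the upper bound $u(e^*)$ (immediate from the definition of $\bar w$), and making explicit the role of the hypothesis $\min_{t_k\in Y}w(P_k)\ge M$ in discharging the StRD constraint — without it, the single allowed upgrade might be forced to repair a deficient path rather than being free to maximize $In$, which is precisely why that regime is analyzed separately.
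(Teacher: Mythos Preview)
Your proposal is correct and follows essentially the same approach as the paper: use the hypothesis to discharge the StRD constraint, then pick the single edge with largest SRD increment. The paper's proof is a two-sentence sketch, so your version simply spells out the details (why $\tilde w(P_k)\ge w(P_k)\ge M$, why upgrading to $\bar w(e)$ is best for a fixed $e$, and why the no-upgrade solution is dominated) that the paper leaves implicit.
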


\begin{proof}
    When $\min_{t_k \in Y}w(P_k) \ge M$, the StRD constraint is trivally satisfied. Our objective then shifts to maximizing the SRD by upgrading one edge. Naturally, the optimal solution selects the edge $e^*$ with the greatest SRD increment.
\end{proof}

\begin{theorem} \label{thm_shortest}
     If $\min_{t_k \in Y}w(P_k) < M$, let $k^*:= \operatorname{arg\,min}_{t_k \in Y} w(P_k)$, then there must be an optimal solution $w_{e^*}$ for some $e^* \in P_{t_{k^*}}$.
\end{theorem}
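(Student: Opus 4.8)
The plan is to show that, whenever the problem is feasible, \emph{every} feasible solution of (DIT$_{H\infty}^1$) modifies exactly one edge, that this edge is forced to lie on the shortest root-leaf path $P_{t_{k^*}}$, and that among all solutions modifying a fixed edge the one raising its weight all the way to the cap $\bar w$ is best; combining these three facts yields an optimal solution of the canonical form $w_{e^*}$ with $e^* \in P_{t_{k^*}}$.

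First I would record the effect of the Hamming budget: since $N=1$ and $r(e)=1$ for all $e\in E$, a feasible $\tilde{w}$ has $\tilde{w}(e)\neq w(e)$ for at most one edge. It cannot hold for zero edges, because then $\tilde{w}=w$ and $\min_{t_k\in Y}\tilde{w}(P_k)=\min_{t_k\in Y}w(P_k)<M$ by hypothesis, violating the StRD constraint. Hence any feasible solution modifies exactly one edge, say $\hat{e}$, with $\tilde{w}(\hat{e})\in(w(\hat{e}),\bar{w}(\hat{e})]$, the upper bound $\bar{w}(\hat{e})=\min\{w(\hat{e})+K/c(\hat{e}),u(\hat{e})\}$ being forced by the $l_\infty$-cost bound $K$ together with the box constraint $\tilde{w}(\hat{e})\le u(\hat{e})$.

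Next I would pin down the location of $\hat{e}$. If $\hat{e}\notin P_{t_{k^*}}$, then $t_{k^*}\notin L(\hat{e})$, so $\tilde{w}(P_{t_{k^*}})=w(P_{t_{k^*}})=\min_{t_k\in Y}w(P_k)<M$, again contradicting the StRD constraint. Therefore $\hat{e}\in P_{t_{k^*}}$ for every feasible solution, in particular for an optimal one. Finally I would apply a monotonicity argument to replace $\tilde{w}(\hat{e})$ by $\bar{w}(\hat{e})$: raising the weight of the unique modified edge leaves the box and $l_\infty$-cost constraints satisfied (by definition of $\bar{w}$), does not touch the Hamming constraint, and makes every path weight $\tilde{w}(P_k)$ nondecreasing, so $\min_{t_k\in Y}\tilde{w}(P_k)\ge M$ is preserved; meanwhile the objective $\tilde{w}(T)=\sum_{t_k\in Y}\tilde{w}(P_k)$ weakly increases, the gain being $In_{w_{\hat{e}}}(\hat{e})-|L(\hat{e})|(\tilde{w}(\hat{e})-w(\hat{e}))=|L(\hat{e})|(\bar{w}(\hat{e})-\tilde{w}(\hat{e}))\ge 0$. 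Thus $w_{\hat{e}}$ is feasible and at least as good as any optimal solution, and taking $e^*:=\hat{e}\in P_{t_{k^*}}$ proves the claim.

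The main obstacle here is not deep: it is the careful bookkeeping in the monotonicity step, namely checking that pushing the single modified edge up to $\bar{w}$ cannot break feasibility because it relaxes (rather than tightens) the binding StRD constraint, together with the degenerate case in which $w_{e^*}$ violates the StRD constraint for \emph{every} $e^*\in P_{t_{k^*}}$ — in that situation no feasible solution exists at all, so the statement is vacuously consistent rather than contradicted. One should also note that this argument is exactly what reduces the search for the modified edge from all of $E$ to the path $P_{t_{k^*}}$, which is what later makes the greedy $O(n)$ algorithm possible.
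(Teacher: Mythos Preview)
Your proof is correct and follows the same core idea as the paper: the single modified edge must lie on $P_{t_{k^*}}$ because otherwise that path's weight is unchanged and the StRD constraint fails. The paper's own proof is a one-line remark (``the necessity of the upgrade edge being a part of the shortest path is evident for it to have an impact in the constraint''), whereas you have supplied the details it omits---ruling out the zero-edge case, verifying that raising the modified edge to $\bar w$ preserves feasibility and weakly improves the objective, and handling infeasibility vacuously---all of which the paper takes for granted from the earlier discussion around Theorem~\ref{DIT_ans_theorem} and equation~(\ref{DIT_inf_ans}).
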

\begin{proof}
The necessity of the upgrade edge being a part of the shortest path is evident for it to have an impact in the constraint.
\end{proof}

Consequently, it has been established that the target edge exists within the shortest path $P_{k^*}$ connecting the source node $s$ to the leaf node $t_{k^*}$. In order to further decompose the problem, we consider the following cases.

\textbf{Case 1.} When $\max_{e \in P_{k^*}} (\bar w(e) -w(e)) <M-w(P_{k^*}).$ In this case, the problem is infeasible since upgrading any edge would not satisfy the constraint.

\textbf{Case 2.} When $\max_{e \in P_{k^*}} (\bar w(e) -w(e)) \ge M-w(P_{k^*})$. Let $Y_1:=\{t_k\in Y| w(P_{k}) < M\}$ be the set of leaves not satisfying (\ref{GDITH_Program_a}). Let $B := \{ e \in P_{k^*} |(\bar w(e) -w(e)) \ge M-w(P_{k^*}), Y_1 \subseteq L(e) \}$.

\textbf{Case 2.1} $|Y_1|=1$.

We need to upgrade $\tilde e$ that satisfies $\bar w(\tilde e) -w(\tilde e) \ge M-w(P_{k^*})$ and also has the largest SRD increment. To be specific, let $\tilde e:= \operatorname{arg\,max}_{ e \in 
B} In_{w_e}(e)$, and then  $\tilde w:=w_{\tilde e}$ is the optimal solution.

\textbf{Case 2.2.} $|Y_1|> 1$ and $B=\emptyset$.

In this case, the problem is infeasible. Modifying a single edge to extend the StRD is ineffective, as this alteration fails to impact all existing shortest paths.

\textbf{Case 2.3.} $|Y_1|> 1$ and $B\neq\emptyset$. 

In this case, upgrade any edge in $B$ can satisfy all StRD constraint and therefore we choose the one $e^*:= \operatorname{arg\,max}_{e \in B} In_{w_e}(e)$ with the largest SRD increment. Then the optimal solution is identified as  $\tilde{w}=w_{e^*}$. 

By the above analyze, we give the following theorem.
\begin{theorem}
    The $\tilde{w}$ defined in \textbf{Case 2.1} and \textbf{Case 2.3.} is an optimal solution of problem (DIT$_{H\infty}^1$).
\end{theorem}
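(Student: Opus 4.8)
The plan is to shrink the feasible region to single-edge upgrades that reach the cap $\bar w$, to characterize which of these satisfy the StRD constraints, and then to select the feasible one with the largest SRD increment. First I would note that under the Hamming budget $N=1$ with $r\equiv 1$ at most one edge is modified, and for that edge $e$ the $l_\infty$ constraint reads $c(e)(\tilde w(e)-w(e))\le K$, so $\bar w(e)$ from Equation~(\ref{DIT_inf_ans}) is the largest admissible value. Since both the objective $\tilde w(T)$ and every path length $\tilde w(P_k)$ are nondecreasing in the single modified coordinate (they are constant for $t_k\notin L(e)$ and affine increasing in $\tilde w(e)$ for $t_k\in L(e)$), any feasible solution that sets the modified weight strictly below $\bar w(e)$ is dominated by $w_e$; hence it suffices to search over $\{w_e : e\in E\}$. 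When $\min_{t_k\in Y} w(P_k)<M$, Theorem~\ref{thm_shortest} further confines the modified edge to $P_{k^*}$, reducing the search to $\{w_e : e\in P_{k^*}\}$.

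Next I would characterize feasibility of $w_e$ for $e\in P_{k^*}$. Upgrading $e$ only increases path lengths, so every leaf with $w(P_k)\ge M$ stays feasible; thus $w_e$ is feasible iff $w_e(P_k)\ge M$ for all $t_k\in Y_1$. Because $w_e$ changes $w(P_k)$ only for $t_k\in L(e)$, this forces $Y_1\subseteq L(e)$, and given that, the tightest of the inequalities $w(P_k)+(\bar w(e)-w(e))\ge M$ over $t_k\in Y_1$ is the one at $t_{k^*}$, since $t_{k^*}\in Y_1$ and $w(P_{k^*})=\min_{t_k\in Y}w(P_k)\le w(P_k)$ for every $t_k\in Y_1$. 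Hence $w_e$ is feasible precisely when $e\in B$. In \textbf{Case 2.1} the condition $|Y_1|=1$ makes $Y_1\subseteq L(e)$ automatic for every $e\in P_{k^*}$, so $B=\{e\in P_{k^*}: \bar w(e)-w(e)\ge M-w(P_{k^*})\}$ coincides with the set used there; in \textbf{Case 2.3.} the set $B$ is used directly. Both cases sit inside \textbf{Case 2}, which guarantees $B\neq\emptyset$, so the claimed $\tilde w$ is well defined.

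Finally, on the feasible set $\{w_e : e\in B\}$ the objective equals $w(T)+In_{w_e}(e)$ by Equation~(\ref{equ_In(e)}), so the maximum is attained at $e^*=\operatorname{arg\,max}_{e\in B}In_{w_e}(e)$, i.e. at the $\tilde w$ of \textbf{Case 2.1} and \textbf{Case 2.3.} The only delicate point is the first paragraph: one must be sure that no optimal solution is forced to use a partial upgrade of an edge lying off $P_{k^*}$. This is exactly where the simultaneous monotonicity of the objective and of every StRD left-hand side in the single modified weight is used --- it lets us push the modified edge up to $\bar w(e)$ and, via Theorem~\ref{thm_shortest}, onto $P_{k^*}$ without loss, after which the finite search over $B$ settles optimality.
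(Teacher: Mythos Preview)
Your proposal is correct and follows essentially the same logical route as the paper: restrict to single-edge upgrades pushed to $\bar w(e)$, use Theorem~\ref{thm_shortest} to confine the upgraded edge to $P_{k^*}$, show that feasibility of $w_e$ for $e\in P_{k^*}$ is equivalent to $e\in B$, and then maximize the SRD increment over $B$. The paper phrases the last step as a short contradiction argument, whereas you argue directly by identifying the feasible set and maximizing over it; your version is more explicit (in particular, the feasibility characterization via the tightest constraint at $t_{k^*}$ and the observation that $|Y_1|=1$ makes $Y_1\subseteq L(e)$ automatic on $P_{k^*}$), but the substance is the same. One small wording issue: it is not \textbf{Case~2} alone that guarantees $B\neq\emptyset$, but rather \textbf{Case~2} together with $|Y_1|=1$ in \textbf{Case~2.1}, and the explicit hypothesis $B\neq\emptyset$ in \textbf{Case~2.3}; your surrounding sentences make this clear, so it is only a phrasing imprecision.
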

\begin{proof}

For \textbf{Case 2.1}, suppose there exists a superior solution \( w' \) with larger SRD,  let the upgraded edge be denoted by \( e' \). According to Theorem \ref{thm_shortest}, we have \( e' \in P_{k^*} \), and by the definition of $\tilde w$,  it must hold that $ (\bar w(e) -w(e)) < M-w(P_{k^*})$. This leads to a contradiction with feasibility.

For \textbf{Case 2.3}, assume a superior solution \( w' \) exists with larger SRD than $\tilde w$,  and denote the upgraded arc by \( e' \). Then, form Scenario 1 we know that $e' \in B$, by definition, $\tilde w$ should have the largest SRD, which is a contradiction.
\end{proof}

Therefore, we have the following algorithm \ref{Alg_DITH_inf_N1} to solve (DIT$_{H\infty}^1$).

\begin{algorithm}
	\caption{A greedy algorithm to solve (DIT$_{H\infty}^1$).}
 \label{Alg_DITH_inf_N1}
	\small
	\begin{algorithmic}[1]
		\Require A tree $T$ rooted at $s$, two edge weight vectors $w$ and $u$, two values $K$ and $M$.
		\Ensure The optimal value $w_{e^*}(T)$, the upgraded edge $e^*$ and the optimal solution $w_{e^*}$.
            \State Calculate $\bar w$ as in (\ref{DIT_inf_ans}). Let $k^*:= \operatorname{arg\,min}_{t_k \in Y} w(P_k)$.
            \If{$\min_{t_k \in Y}w(P_k) \ge M$}
            \State Let $e^*:= \operatorname{arg\,max}_{e \in E} In_{w_e}(e)$.
            \State \Return $w_{e^*}(T), e^*, w_{e^*}$.
            \ElsIf{$\max_{e \in P_{k^*}} (\bar w(e) -w(e)) \ge M-w(P_{k^*})$}        
                       
            \State Let $Y_1:=\{t_k\in Y| w(P_{k}) < M\}$.
            \State Let  $B := \{ e \in P_{k^*} |(\bar w(e) -w(e)) \ge M-w(P_{k^*}), Y_1 \subseteq L(e) \}$.
            \If{$|Y_1|=1$}
            \State Let $e^*:= \operatorname{arg\,max}_{  e \in  
B } In_{w_e}(e)$.
            \State \Return $w_{e^*}(T),e^*, w_{e^*}.$
            \Else           
                      
            \If{$B\neq\emptyset$}
            \State Let $e^*:= \operatorname{arg\,max}_{e \in B} In_{w_e}(e)$
            \State \Return $w_{e^*}(T),e^*, w_{e^*}.$
            \Else
            \State \Return No feasible solution.   
            \EndIf
        \EndIf
        \Else 
        \State \Return No feasible solution.
      
      \EndIf
	\end{algorithmic}
\end{algorithm}
Given that finding the maximum value in a set takes $O(n)$ time, we can draw the following conclusion.
\begin{theorem}
The problem (DIT$_{H\infty}^1$)  can be solved by Algorithm \ref{Alg_DITH_inf_N1} in O($n$) time.
\end{theorem}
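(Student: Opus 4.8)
The plan is to verify that every line of Algorithm~\ref{Alg_DITH_inf_N1} runs in $O(n)$ time and that the algorithm terminates after a single pass through the branching structure, so the overall cost is $O(n)$. Since correctness has already been established by the preceding theorems (covering the case $\min_{t_k\in Y}w(P_k)\ge M$, the infeasibility Cases~1 and~2.2, and the optimal solutions in Cases~2.1 and~2.3), only the complexity claim remains, and it suffices to account for the work done in each step.

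First I would handle the preprocessing: computing $\bar w(e)=\min\{w(e)+K/c(e),u(e)\}$ for all $e\in E$ takes $O(n)$ since there are $n$ edges and each entry costs $O(1)$; computing all path weights $w(P_k)$ and hence $k^*:=\arg\min_{t_k\in Y}w(P_k)$ is done by a single traversal of the tree (propagating cumulative weights from the root), again $O(n)$. Likewise, the set $L(e)$ sizes $|L(e)|$ for all edges, needed to evaluate $In_{w_e}(e)=|L(e)|(\bar w(e)-w(e))$ via Equation~(\ref{equ_In(e)}), can be obtained by one bottom-up traversal counting leaves in each subtree, in $O(n)$ total. I would state these as the shared preprocessing cost.

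Next I would go branch by branch. Testing $\min_{t_k\in Y}w(P_k)\ge M$ is $O(1)$ once the path weights are known; if it holds, finding $e^*:=\arg\max_{e\in E}In_{w_e}(e)$ is a single scan over $n$ edges, $O(n)$. Otherwise, the test $\max_{e\in P_{k^*}}(\bar w(e)-w(e))\ge M-w(P_{k^*})$ scans the path $P_{k^*}$, which has at most $n$ edges. Forming $Y_1:=\{t_k\in Y\mid w(P_k)<M\}$ is a scan over the $m\le n$ leaves; forming $B$ requires, for each of the $\le n$ edges of $P_{k^*}$, checking the threshold condition ($O(1)$) and checking $Y_1\subseteq L(e)$ --- the latter can be done within the same overall $O(n)$ budget, e.g.\ by precomputing for each edge on $P_{k^*}$ whether its controlled leaf set contains all of $Y_1$ (one can test this by verifying that the deepest node of $P_{k^*}$ whose subtree contains every leaf of $Y_1$ lies below the edge, determined in a single traversal). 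The remaining steps --- testing $|Y_1|=1$, testing $B\neq\emptyset$, and the two $\arg\max$ computations over $B$ (a subset of the $\le n$ edges of $P_{k^*}$) --- are each $O(n)$ or cheaper. Since the control flow selects exactly one return branch and does no iteration, summing these gives $O(n)$.

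The main obstacle I anticipate is the clean accounting of the condition $Y_1\subseteq L(e)$ inside the construction of $B$: a naive check ``is $Y_1\subseteq L(e)$?'' repeated for each edge $e\in P_{k^*}$ could appear to cost $O(|P_{k^*}|\cdot|Y_1|)=O(n^2)$. I would resolve this by observing that along a single root-leaf path $P_{k^*}$ the sets $L(e)$ are nested and shrink monotonically toward the leaf, so there is a unique ``deepest'' edge $e_{\mathrm{bot}}$ on $P_{k^*}$ with $Y_1\subseteq L(e_{\mathrm{bot}})$ (when one exists), and every ancestor edge of it on $P_{k^*}$ also satisfies the containment; identifying $e_{\mathrm{bot}}$ amounts to finding, for each leaf in $Y_1$, how far down $P_{k^*}$ its path stays on $P_{k^*}$, which is obtainable in one $O(n)$ traversal. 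With that structural remark in place, the whole algorithm is a constant number of $O(n)$ passes, completing the proof.
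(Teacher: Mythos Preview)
Your proposal is correct and follows the same approach as the paper---a line-by-line complexity accounting showing each step is $O(n)$---but is considerably more careful: the paper's proof is essentially a single sentence (``finding the maximum value in a set takes $O(n)$ time''), whereas you explicitly justify the preprocessing passes and, in particular, resolve the $Y_1\subseteq L(e)$ containment test via the nesting of $L(e)$ along $P_{k^*}$, a subtlety the paper leaves implicit.
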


\subsection{Solve the general problem (DIT$_{H\infty}$)}
For the general form of problem (DIT$_{H\infty}$), things escalates in complexity. When deciding whether to upgrade an edge, a delicate balance between the StRD and SRD must be struck. We begin by laying down foundational definitions. Subsequently, we formulate an objective function that accounts for both factors which results in a Combination Interdiction problem on Trees ($CIT_{H \infty}^{\lambda}$). Constructing the state transition equation marks the completion of one iteration. Ultimately, we iterate to establish optimal parameters and uncover the optimal solution, a process that incurs pseudo-polynomial  time complexity.
\begin{figure}[htbp!!!!]
    \centering
    \includegraphics[scale=0.25]{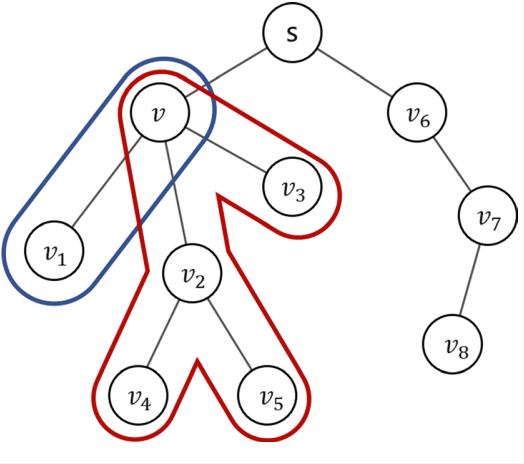}
    \caption{The area labeled in blue 
and red are the subtrees $T_v(1)$
and $T_v(2:3)$, respectively \cite{Lei}.}
    \label{fig:subtree}
\end{figure}
\begin{definition} \cite{Zhang_MSPH}
    Define  $T_{v}=\left(V_{v}, E_{v}\right)$  as the subtree rooted at  $v$  for any  $v \in V$. Let  $S(v)=\left\{v_{s_{1}}, \ldots, v_{s_{|S(v)|}}\right\}$ be the son set, where $v_{s_{q}}$ the $q$-th son  for any non-leaf node  $v$. Let $P(v_1,v_2)$ the unique path from $v_1$ to $v_2$. Define the left  $q$-subtree of  $v$  as  $T_{v}(q)=T_{v_{s_{q}}} \cup P(v, v_{s_{q}})$  and the  $p:q$-subtree of  $v$  as  $T_{v}(p:q)=\left(\bigcup_{i=p}^{q} T_{v}(i)\right) \cup\{v\}$. Specially, define  $T_{v}(p: 0)=\emptyset$.
\end{definition}

\subsubsection{A dynamic programming algorithm to solve ($CIT_{H \infty}^{\lambda}$)}

To achieve a balance between the StRD and SRD, we introduce the Combination Interdiction problem on Trees ($CIT_{H \infty}^{\lambda}$), which employs a convex combination of these two factors and $0\le \lambda \le 1$ is a parameter. This problem is specifically applied to the structure of the left $p:q$-subtree. Upon solving ($CIT_{H \infty}^{\lambda}$), we can demonstrate the existence of an optimal parameter  $\lambda^*$ such that the solution to ($CIT_{H \infty}^{\lambda}$) perfectly aligns with the solution to the general problem ($DIT_{H\infty}$). Suppose $T_v(p:q)=(V',E')$.
\begin{eqnarray}
&\max \limits_{\hat{w}}& \left\{ (1-\lambda) \sum_{e \in E'}|L(e)|\hat{w}(e) + \lambda \min \limits_{t_k \in \cup_{i=p}^{q}L(v_{_i}) }   \hat{w}(P(v,t_k))\right\} \nonumber\\
(\textbf{CIT}_{H\infty}^{\lambda}) 
&s.t.&   \sum_{e\in E'}r(e)H(\hat{w}(e),w(e))\le N,\label{PIC_inf} \\ \nonumber
&& w(e) \leq \hat{w}(e) \leq \bar{w}(e), \quad e \in E'.\nonumber
\end{eqnarray}
\begin{definition}
     Define $h(v,p:q,N,\lambda)$ the optimal value and  $\hat w_{(v,p:q,N,\lambda)}$ an optimal solution to the problem ($CIT_{H \infty}^{\lambda}$). 
Specifically, define $h(v,p:q,N,\lambda)=+\infty$, if $p>q$ or $N<0$. And define the StRD and SRD under $h(v,p:q,N,\lambda)$ as $SP_{(v,p:q,N,\lambda)}= \min _{t_k \in \cup_{i=p}^{q}L(v_{s_i}) }  \hat{w}(P(v,t_k))$ and  $SRD_{{(v,p:q,N,\lambda)}}=\sum _{e\in E'}|L(e)|\hat{w}(e)$, respectively.
\end{definition}

 Through these definitions, we can perform dynamic programming methods to solve problem ($CIT_{H \infty}^{\lambda}$). We first show the state transition from tree $T_v(p,p)$ to its subtree $T_{v_{s_p}}(1:|S(v_{s_p})|)$.
\begin{theorem} \label{thm_cal_1}
    For any non-leaf node $v$ and any integer $k \ge 1$, let $e_{s_p}:= (v,v_{s_p})$, then we have
    \begin{equation} \label{DP:EQ1}
    \begin{array}{ll}
         &h(v,p:p,k,\lambda)= \max \{ h\left(v_{s_p}, 1:|S(v_{s_p})|, k,\lambda\right)+\lambda w(e_{s_p})+ (1-\lambda) In_w(e_{s_p}),  \\
         &  h\left(v_{s_p}, 1:|S(v_{s_p})|, k-r(e_{s_p}),\lambda \right)+\lambda \bar{w}(e_{s_p})+ (1-\lambda) In_{\bar w}(e_{s_p}) \}  
    \end{array}
    \end{equation}

\end{theorem}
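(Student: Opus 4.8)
The plan is to establish the recursion \eqref{DP:EQ1} by observing that in the $p{:}p$-subtree $T_v(p:p) = T_{v_{s_p}} \cup P(v,v_{s_p})$, the only edge not already contained in $T_{v_{s_p}}$ is $e_{s_p} = (v,v_{s_p})$, so any feasible solution $\hat w$ to $(CIT_{H\infty}^\lambda)$ on $T_v(p:p)$ is determined by a choice for $e_{s_p}$ together with a feasible solution on $T_{v_{s_p}}(1:|S(v_{s_p})|)$. First I would split into the two cases $\hat w(e_{s_p}) = w(e_{s_p})$ (edge not upgraded, Hamming cost $0$) and $\hat w(e_{s_p}) \ne w(e_{s_p})$ (edge upgraded). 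In the latter case I would argue — as justified by Theorem~\ref{DIT_ans_theorem} and the reformulation \eqref{GDITH_Program} — that it is optimal to set $\hat w(e_{s_p}) = \bar w(e_{s_p})$, since the objective is nondecreasing in each $\hat w(e)$ both through the SRD term $\sum_{e}|L(e)|\hat w(e)$ and through the $\min$-of-path-lengths term, while the only constraint an upgrade affects is the Hamming budget, which is already fully charged $r(e_{s_p})$ once $\hat w(e_{s_p}) \ne w(e_{s_p})$.

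Next I would check how the objective value on $T_v(p:p)$ relates to that on $T_{v_{s_p}}(1:|S(v_{s_p})|)$ once the value $\hat w(e_{s_p}) \in \{w(e_{s_p}), \bar w(e_{s_p})\}$ is fixed. For the SRD part, adding $e_{s_p}$ contributes exactly $|L(e_{s_p})|\hat w(e_{s_p})$; writing this as $In_w(e_{s_p}) + |L(e_{s_p})|w(e_{s_p})$ or $In_{\bar w}(e_{s_p}) + |L(e_{s_p})|w(e_{s_p})$ via \eqref{equ_In(e)} — here one must be slightly careful about which additive constants the function $h$ absorbs, and match the paper's convention so the stated formula comes out verbatim. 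For the StRD part, every leaf $t_k \in \bigcup L(v_{s_i})$ over the $p{:}p$-subtree is a leaf of $T_{v_{s_p}}$, and $\hat w(P(v,t_k)) = \hat w(e_{s_p}) + \hat w(P(v_{s_p},t_k))$, so $\min_k \hat w(P(v,t_k)) = \hat w(e_{s_p}) + \min_k \hat w(P(v_{s_p},t_k))$; the shared term $\hat w(e_{s_p})$ pulls out of the $\min$, contributing $\lambda\,\hat w(e_{s_p})$ to the convex-combination objective. Combining the SRD and StRD contributions gives, for each of the two choices of $\hat w(e_{s_p})$, the additive terms $\lambda w(e_{s_p}) + (1-\lambda)In_w(e_{s_p})$ and $\lambda\bar w(e_{s_p}) + (1-\lambda)In_{\bar w}(e_{s_p})$ respectively, on top of the optimal value of $(CIT_{H\infty}^\lambda)$ on $T_{v_{s_p}}(1:|S(v_{s_p})|)$ with the residual budget $k$ or $k - r(e_{s_p})$.

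Finally I would close the argument by the standard optimal-substructure exchange: given an optimal $\hat w$ on $T_v(p:p)$, its restriction to $E_{v_{s_p}}$ is feasible on $T_{v_{s_p}}(1:|S(v_{s_p})|)$ with the appropriate residual budget and has objective value at most $h(v_{s_p},1:|S(v_{s_p})|,k',\lambda)$; conversely any optimal solution achieving the latter can be extended by the corresponding choice of $\hat w(e_{s_p})$ to a feasible solution on $T_v(p:p)$. Taking the maximum over the two choices for $e_{s_p}$ yields both inequalities and hence \eqref{DP:EQ1}. I also need to record the boundary behavior consistent with $h(v,p:q,N,\lambda) = +\infty$ for $N<0$, which automatically handles the case $k - r(e_{s_p}) < 0$ (the upgrade option is then vacuous). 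The main obstacle I anticipate is purely bookkeeping rather than conceptual: making sure the additive constants $|L(e_{s_p})|w(e_{s_p})$ versus the increment form $In_w(e_{s_p})$ are attributed exactly as in the paper's definition of $h$ — in particular verifying that the term written as $(1-\lambda)In_w(e_{s_p})$ (rather than $(1-\lambda)|L(e_{s_p})|w(e_{s_p})$) is indeed the correct one given how $h$ on the child subtree is defined, since $T_{v_{s_p}}(1:|S(v_{s_p})|)$ does not itself include $e_{s_p}$ — and likewise confirming that the $\min$ over leaves is taken over the same index set on both sides.
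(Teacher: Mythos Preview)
Your proposal is correct and follows essentially the same approach as the paper's proof: split on whether $e_{s_p}$ is upgraded (Hamming cost $0$ or $1$), observe that in the upgraded case one may as well push to $\bar w(e_{s_p})$, and then read off the additive contributions $\lambda\,\hat w(e_{s_p})$ to the StRD term and $(1-\lambda)|L(e_{s_p})|\hat w(e_{s_p})$ to the SRD term. Your caution about the bookkeeping is well placed---the paper's own statement writes $(1-\lambda)In_w(e_{s_p})$ while the later restatement \eqref{DITH_2eq_b} writes $(1-\lambda)|L(e_{s_p})|w(e_{s_p})$, and given the definition \eqref{equ_In(e)} it is the latter that is consistent with the objective in \eqref{PIC_inf}; your derivation would naturally produce the correct version.
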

\begin{proof}
Suppose $\hat w_p$  is an optimal solution corresponding to the objective value $h(v,p:p,k,\lambda)$, then there are the following two cases for $\hat w_p$ depending on whether the edge $e_{s_p}$ is upgraded or not.

\textbf{Case 1:} $H(\hat w_p(e_{s_p}),w(e_{s_p}))=0$, which means the edge $e_{s_p}$ is not upgraded and $\hat w_p(e_{s_p})=w(e_{s_p})$, then  $SP_{(v,p:p,k,\lambda)}=w(v_{s_p})+ SP_{(v_{s_p},1:|S(v_{s_p})|,k, \lambda)}$.  So the StRD increases $w(e_{s_p})$ by edge $e_{s_p}$, and SRD increases by $In_{ w}(e_{s_p})$, which renders the first item in \eqref{DP:EQ1}.

\textbf{Case 2:} $H(\hat w_p(e_{s_p}),w(e_{s_p}))=1$. In this case, the edge $e_{s_p}$ is upgraded to $\bar w(e_{s_p})$. Then $SP_{(v,p:p,k,\lambda)}=\bar w(e_{s_p})+SP_{(v_{s_p},1:|S(v_{s_p})|,k-r(e_{s_p}),\lambda)}$. Hence StRD increases $\bar w(e_{s_p})$, SRD increases by $In_{\bar w}(e_{s_p})$, which renders the second item in \eqref{DP:EQ1}.
\end{proof}

Next, we show that the problem ($CIT_{H \infty}^{\lambda}$) defined on $T_v(p:q)$ can be divided into two sub-trees $T_v(p:l)$ and $T_v(l+1:q)$ with the following theorem.

\begin{theorem}\label{thm_cal_2}
Suppose that v is a non-leaf node. For any child node index $p,l,q$ satisfying $p \le l< r$, then the optimal value of the problem ($CIT_{H \infty}^{\lambda}$) defined on $T_v(p:q)$ can be calculated by
\begin{eqnarray}   
h(v,p:q,k,\lambda)=\max_{ k_1+k_2 \le k} &\{ & \lambda   \min\{
SP_{(v,p:l,k_1,\lambda)},SP_{(v,l+1:q,k_2,\lambda)} \}\nonumber\\
&+& (1-\lambda)(SRD_{(v,p:l,k_1,\lambda)}+SRD_{(v,l+1:q,k_2,\lambda)}) \}.\label{eq:hvpp}
\end{eqnarray}
\end{theorem}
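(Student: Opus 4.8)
The plan is to prove the claimed identity by two inequalities, using that $T_v(p:q)$ is obtained by gluing $T_v(p:l)$ and $T_v(l+1:q)$ along the single common node $v$. First I would record the structural facts this gluing gives. Writing $T_v(p:l)=(V_1,E_1)$ and $T_v(l+1:q)=(V_2,E_2)$, the edge sets satisfy $E'=E_1\cup E_2$ with $E_1\cap E_2=\emptyset$; for every leaf $t_k\in\cup_{i=p}^{l}L(v_{s_i})$ the path $P(v,t_k)$ lies entirely in $E_1$ and likewise for $\cup_{i=l+1}^{q}L(v_{s_i})$ in $E_2$, and these two leaf-index sets partition $\cup_{i=p}^{q}L(v_{s_i})$; finally the coefficient $|L(e)|$ attached to an edge $e$ is a constant not depending on which of the three subtrees contains $e$. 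Hence for any weight vector $\hat w$ on $E'$ the three ingredients of the objective split: $\sum_{e\in E'}r(e)H(\hat w(e),w(e))$ is the sum of the costs restricted to $E_1$ and to $E_2$; $\sum_{e\in E'}|L(e)|\hat w(e)$ is the sum of the corresponding SRD terms on $E_1$ and $E_2$; and the StRD term $\min_{t_k\in\cup_{i=p}^{q}L(v_{s_i})}\hat w(P(v,t_k))$ equals the minimum of the StRD of $\hat w|_{E_1}$ over $\cup_{i=p}^{l}L(v_{s_i})$ and the StRD of $\hat w|_{E_2}$ over $\cup_{i=l+1}^{q}L(v_{s_i})$.

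For the inequality $\ge$ I would combine: for any $k_1,k_2$ with $k_1+k_2\le k$, paste the recorded optimal solutions $\hat w_{(v,p:l,k_1,\lambda)}$ and $\hat w_{(v,l+1:q,k_2,\lambda)}$ on the disjoint edge sets $E_1,E_2$. Box and Hamming feasibility on $T_v(p:q)$ follow edgewise, and the splitting identities show the pasted solution attains exactly $(1-\lambda)\bigl(SRD_{(v,p:l,k_1,\lambda)}+SRD_{(v,l+1:q,k_2,\lambda)}\bigr)+\lambda\min\{SP_{(v,p:l,k_1,\lambda)},SP_{(v,l+1:q,k_2,\lambda)}\}$; maximizing over $k_1,k_2$ gives $h(v,p:q,k,\lambda)\ge$ the right-hand side of \eqref{eq:hvpp}.

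For $\le$, take an optimal $\hat w^*$ for $(CIT_{H\infty}^{\lambda})$ on $T_v(p:q)$ with budget $k$, let $k_1,k_2$ be the Hamming cost it spends on $E_1,E_2$ (so $k_1+k_2\le k$), and let $\hat w^*_1,\hat w^*_2$ be its restrictions. By the splitting identities, $h(v,p:q,k,\lambda)=(1-\lambda)(SRD(\hat w^*_1)+SRD(\hat w^*_2))+\lambda\min\{SP(\hat w^*_1),SP(\hat w^*_2)\}$; assuming without loss of generality that $SP(\hat w^*_1)\le SP(\hat w^*_2)$, this equals $\bigl[(1-\lambda)SRD(\hat w^*_1)+\lambda SP(\hat w^*_1)\bigr]+(1-\lambda)SRD(\hat w^*_2)$, where the bracket is the $(CIT_{H\infty}^{\lambda})$-objective of the feasible vector $\hat w^*_1$ on $T_v(p:l)$ and is therefore $\le h(v,p:l,k_1,\lambda)$. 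The remaining step, namely that $(1-\lambda)SRD(\hat w^*_2)$ is bounded by the term contributed by the recorded optimum on $T_v(l+1:q)$, is the crux and the main obstacle: because the two sides are coupled only through the $\min$, the restriction $\hat w^*_2$ need merely be best on $T_v(l+1:q)$ among solutions whose StRD stays $\ge SP(\hat w^*_1)$, while a convex-combination-optimal solution on a subtree may carry strictly more StRD than the combined problem requires, so one cannot simply invoke optimality of the two sides independently. I would settle this by an exchange argument organized around which subtree realizes the smaller StRD: show that one may take the ``small-StRD side'' to be solved by a $(CIT_{H\infty}^{\lambda})$-optimal vector for its own budget, and the other side to maximize SRD subject to its StRD staying at least that value (and to its budget), then verify that pairing these two quantities reproduces the right-hand side of \eqref{eq:hvpp} for that $(k_1,k_2)$ — adjusting, if necessary, which representative optimal solution is designated $\hat w_{(\cdot)}$ for each subproblem, which is permissible since it is defined only as \emph{an} optimal solution. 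Combining the two inequalities yields the equality, and the maximizing $(k_1,k_2)$ together with the pasted sub-solutions furnishes an explicit optimal solution on $T_v(p:q)$, which is what the dynamic program records.
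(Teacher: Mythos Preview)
Your $\ge$ direction is correct and is in fact more than the paper argues; the paper only treats the other direction. For $\le$ you have correctly isolated a genuine difficulty that the paper's own proof glosses over: the paper takes an optimal $w^*$ on $T_v(p:q)$, splits it into its restrictions to $E_1,E_2$ with Hamming costs $k_1^*,k_2^*$, and then in its last displayed equality simply asserts that the SP and SRD of these restrictions equal $SP_{(v,p:l,k_1^*,\lambda)},SRD_{(v,p:l,k_1^*,\lambda)}$, etc.---i.e.\ that each restriction is itself $(CIT_{H\infty}^{\lambda})$-optimal on its subtree---with no further justification.

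Your proposed repair, however, does not close the gap, and in fact cannot: the redesignation move at the end requires the large-StRD side's SRD-maximizer (subject to the StRD floor) to be $(CIT_{H\infty}^{\lambda})$-optimal on its own subtree, and that need not hold. Concretely, take $\lambda=\tfrac12$, $r\equiv 1$, $k=1$; let side~1 be the cherry $v\to v_1\to\{a,b\}$ with $(w,\bar w)$ equal to $(0,100)$ on $(v,v_1)$, $(50,50)$ on $(v_1,a)$, $(50,251)$ on $(v_1,b)$, and side~2 the path $v\to v_2\to c$ with fixed weights $(0,0),(10,10)$. On side~1 with budget~$1$ the unique $(CIT_{H\infty}^{\lambda})$-optimum upgrades $(v,v_1)$, giving $(SP,SRD)=(150,300)$; side~2 has only $(10,10)$. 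The right-hand side of \eqref{eq:hvpp}, maximized over all splits, equals $\tfrac12\min(150,10)+\tfrac12(300+10)=160$. Yet upgrading $(v_1,b)$ on side~1 yields the combined value $\tfrac12\min(50,10)+\tfrac12(301+10)=160.5$, and this side-1 choice is \emph{not} $(CIT_{H\infty}^{\lambda})$-optimal there (its $h$-value is $175.5<225$). So neither the paper's asserted equality nor your exchange-and-redesignate argument can go through; the identity as stated fails on this instance, and a correct recursion would have to track the Pareto set of achievable $(SP,SRD)$ pairs per state rather than a single pair coming from one convex-combination optimum.
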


\begin{proof}
Let $E_1$, $E_2$ and $E_3=E_1 \cup E_2$ be the edge sets of $T_v(p:l)$, $T_v(l+1:q)$, and  $T_v(p:q)$, respectively.
Suppose $w^*:=\hat w_{(v,p:q,k,\lambda)}$ is the optimal solution, let $k_1^*:=\sum_{ e\in E_1,w^*(e)\ne w(e) } r(e)$, $k_2^*:=\sum_{ e\in E_2,w^*(e)\ne w(e) } r(e)$, then $k_1^*$ and $k_2^*$ are both integers and $k_1^*+k_2^* \le k$.
\begin{eqnarray}
&&h(v,p:q,k,\lambda) \nn\\
&=&\min \limits_{t_k \in \cup_{i=p}^{q}L(v_{s_i}) }  \lambda w^*(P(v,t_k))+(1-\lambda) \sum_{e \in E_3} |L(e)| w^*(e) \nn\\
&=&\lambda\min \left\{\min _{t \in \bigcup_{i=p}^{l} L(v_{s_i})}  w^*(P(v, t)), \min _{t \in \bigcup_{i=l+1}^{q} L(v_{s_i})} w^*(P(v, t))\right\}\nn\\
&&\quad +(1-\lambda)\sum_{e \in E_1}|L(e)|w^*(e)+(1-\lambda)\sum_{e \in E_2}|L(e)|w^*(e)\nn\\
& =& \lambda \min \left\{SP_{ \left(v, p: l, k_{1}^*, \lambda\right)}, SP_{ \left(v, l+1: q, k_{2}^*,\lambda \right)} \right\}\nn\\
&&\quad +(1-\lambda)(SRD_{(v,p:l,k_1^*,\lambda)}+SRD_{(v,l+1:q,k_2^*,\lambda)}).\nn
\end{eqnarray}
The last equation comes from the definition of problem ($CIT_{H \infty}^{\lambda}$), which reveals a good property that the optimal value $h(v,p:q,k,\lambda)$  on $T_v(p:q)$ comes from two optimal solutions $\hat{w}_{(v,p:l,k_1^*,\lambda)}$ and $\hat{w}_{(v,l+1:q,k_2^*,\lambda)}$ on two sub-trees $T_v(p:l)$ and $T_v(l+1:q)$.
Therefore the theorem holds.
\end{proof}

Using the two theorems above, we are able to calculate function $h$ in this two step.
\begin{equation} \label{DITH_2eq_a}
\begin{array}{r}
h(v, 1: p, N,\lambda)=\max\limits_{0\le N_1 \le N, N_1 \in \mathcal{Z}^+} \{ \lambda \min\{SP_{ (v,1:p-1,N_1,\lambda}),SP_{ (v,p:p,N-N_1, \lambda) }\}+ \\
(1-\lambda) (SRD_{(v,1:p-1,N_1,\lambda)}+SRD_{(v,p:p,N-N_1, \lambda)}) \}\\
\end{array}
\end{equation}
\begin{eqnarray} \label{DITH_2eq_b}
&&h(v,p:p,N-N_1,\lambda)\nn\\
&=&\max\{ h\left(v_{s_{p}}, 1:|S(v_{s_{p}})|, N-N_1, \lambda\right)+\lambda w(e_{s_p})+(1-\lambda) |L(e_{s_p})| w(e_{s_p}),\\
&&h\left(v_{s_{p}}, 1:|S(v_{s_{p}})|, N-N_1-r(e_{s_p}), \lambda\right)+
\lambda \bar{w}(e_{s_p})+(1-\lambda) |L(e_{s_p})| \bar{w}(e_{s_p}) \}.\nn
\end{eqnarray}

Throughout the discussion above, we propose Algorithm \ref{Alg_PIC_inf} to solve the problem ($CIT_{H \infty}^{\lambda}$), where we need to call Depth-First Search algorithm 
\textbf{DFS}($s,1:|S(s)|, N, \lambda)$ recursively \cite{Intro_ALG}.
\begin{algorithm}
	\caption{A dynamic programming algorithm to solve  ($CIT_{H \infty}^{\lambda}$).}
 \label{Alg_PIC_inf}
	\small 	\begin{algorithmic}[1]
		\Require A tree $T$ rooted at $s$, two values $N, K$, two weight vectors $w$ and $u$, a parameter $\lambda$.
		\Ensure The optimal value $h(s, 1:|S(s)|, N, \lambda)$, an optimal solution $\hat w_{(s, 1:|S(s)|, N, \lambda)}$.
              \State Call \textbf{DFS}$(s, 1:|S(s)|, N, \lambda)$,
            \State \Return $h(s, 1:|S(s)|, N, \lambda), \hat w_{(s, 1:|S(s)|, N, \lambda)}$.
            \Function{\textbf{DFS}}{$v,p:q,N,\lambda$}
            \If{$v \in L(s)$ or $q<p$ or $N< 0$}
            \State \Return $SP_{(v, p: p, N, \lambda)}:=0$,  $SRD_{(v, p:p, N, \lambda)}:=0$, $h(v, p: p, N, \lambda):=0$,  $E_{(v,p:p, N, \lambda)}:=\emptyset$, $\hat w_{(v, p: p, N, \lambda)}:= w$ .
            \EndIf
            \If{$p=q$}
            \State Calculate \textbf{DFS}($v,p:p, N, \lambda)$, \textbf{DFS}($v_{s_p},p:p, N-r(e_{s_p}), \lambda)$.
            \State By (\ref{DITH_2eq_b}), let $h_1:=h\left(v_{s_{p}}, 1:|S(v_{s_{p}})|, N-N_1, \lambda\right)+\lambda w(e_{s_p})+(1-\lambda) |L(e_{s_p})| w(e_{s_p})$, $h_2:=h\left(v_{s_{p}}, 1:|S(v_{s_{p}})|, N-N_1-r(e_{s_p}), \lambda\right)+
\lambda \bar{w}(e_{s_p})+(1-\lambda) |L(e_{s_p})| \bar{w}(e_{s_p})$.
            \If{$h_1<h_2$}
            \State \Return $SP_{(v, p: p, N, \lambda)}:=SP_{(v_{s_{p}}, 1:|S(v_{s_{p}})|, N-r(e_{s_p}), \lambda)}+\bar w(e_{s_p})$,  $SRD_{(v, p:p, N, \lambda)}$ $:=SRD_{(v_{s_{p}}, 1:|S(v_{s_{p}})|, N-r(e_{s_p}), \lambda)} +|L(e_{s_p})|\bar w(e_{s_p})$, $h(v, p: p, N, \lambda):=h_2$,  $E_{(v,p:p, N, \lambda)}:= E_{(v_{s_{p}}, 1:|S(v_{s_{p}})|,N-r(e_{s_p}), \lambda)} \cup \{ e_{s_p}\}$, $\hat w_{(v,p:p, N, \lambda)}(e):= \begin{cases}
                \bar w(e), e \in E_{(v,p:p, N, \lambda)}, \\
                 w(e), otherwise.            \end{cases}$
            \Else
            \State \Return $SP_{(v, p: p, N, \lambda)}:=SP_{(v_{s_{p}}, 1:|S(v_{s_{p}})|, N, \lambda)}+ w(e_{s_p})$,  $SRD_{(v, p:p, N, \lambda)}:=SRD_{(v_{s_{p}}, 1:|S(v_{s_{p}})|, N, \lambda)} +|L(e_{s_p})|w(e_{s_p})$, $h(v, p: p, N, \lambda):=h_1$,  $E_{(v,p:p, N, \lambda)}:= E_{(v_{s_{p}}, 1:|S(v_{s_{p}})|,N, \lambda)}$, $\hat w_{(v,p:p, N, \lambda)}(e):= \begin{cases}
                \bar w(e), e \in E_{(v,p:p, N, \lambda)}, \\
                 w(e), otherwise.
            \end{cases}$
            \EndIf
            \Else
            \For{$N_1 = 0$ to $N$}
            \State Calculate \textbf{DFS}($v,1:p-1 , N_1, \lambda)$, \textbf{DFS}($v,p:p , N-N_1, \lambda)$.
            \EndFor
            \State Calculate $h(v,1:p,N, \lambda)$ by (\ref{DITH_2eq_a}) whose maximum achieved at  $N_1:=N_1^*$.
            \State \Return $SP_{(v,1:p,N,\lambda)}:= \min \{SP_{(v,1:p-1,N_1^*,\lambda)},SP_{(v,p:p,N-N_1^*,\lambda)} \}$, $SRD_{(v,1:p,N, \lambda)}:=SRD_{(v,1:p-1,N_1^*,\lambda)}+SRD_{(v,p:p,N-N_1^*,\lambda)}$, $h(v,1:p,N, \lambda)$, $E_{(v,1:p,N, \lambda)}:=E_{(v,1:p-1,N_1^*,\lambda)} \cup E_{(v,p:p,N-N_1^*, \lambda)}$, $\hat w_{(v,1:p, N, \lambda)}(e):= \begin{cases}
                \bar w(e), e \in E_{(v,1:p, N, \lambda)}, \\
                 w(e), otherwise.
                 \end{cases}$
            \EndIf
            \EndFunction

	\end{algorithmic}
\end{algorithm}
\begin{theorem} \label{thm-CIT}
    Algorithm  \ref{Alg_PIC_inf} can solve problem ($CIT_{H \infty}^{\lambda}$) in $O(nN^2)$ time.
\end{theorem}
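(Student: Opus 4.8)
The plan is to split Theorem~\ref{thm-CIT} into a correctness claim (Algorithm~\ref{Alg_PIC_inf} returns the optimal value $h(s,1:|S(s)|,N,\lambda)$ of $(CIT_{H\infty}^{\lambda})$ on the whole tree together with a solution $\hat w_{(s,1:|S(s)|,N,\lambda)}$ attaining it) and a complexity claim ($O(nN^2)$ time under standard memoization). Correctness I would prove by structural induction on the size of the subtree $T_v(p:q)$, using Theorems~\ref{thm_cal_1} and \ref{thm_cal_2} as the two inductive rules; the running time I would get from counting the distinct memoized states of the recursion \textbf{DFS} and bounding the work done at each.

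For correctness I would first dispatch the base cases in the opening \textbf{if}: a leaf of $T$, an empty interval ($q<p$), or an exhausted budget ($N<0$). On the empty edge set the only feasible vector is $\hat w=w$, so the returned tuple $(SP,SRD,h,E,\hat w)=(0,0,0,\emptyset,w)$ is correct and matches the convention adopted for $(CIT_{H\infty}^{\lambda})$ in these degenerate cases. For the inductive step I would use that the recursion ever produces only intervals of the two shapes $p:p$ and $1:p$. When the interval is $p:p$, the subtree is $T_v(p:p)=T_{v_{s_p}}\cup P(v,v_{s_p})$, and Theorem~\ref{thm_cal_1}, written out as \eqref{DITH_2eq_b}, says that an optimum either leaves $e_{s_p}$ at $w(e_{s_p})$ with residual budget $N$ or raises it to $\bar w(e_{s_p})$ with residual budget $N-r(e_{s_p})$, in each case appended to the optimal solution on $T_{v_{s_p}}(1:|S(v_{s_p})|)$ delivered by the recursive call; the algorithm forms $h_1,h_2$ exactly in this way, keeps the larger, and updates $SP$, $SRD$, $E$, $\hat w$ so that they still describe that solution. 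When the interval is $1:p$, Theorem~\ref{thm_cal_2}, written out as \eqref{DITH_2eq_a}, says an optimum on $T_v(1:p)$ is the concatenation of an optimum on $T_v(1:p-1)$ with some integer budget $N_1$ and an optimum on $T_v(p:p)$ with budget $N-N_1$; the loop over $N_1\in\{0,\dots,N\}$ enumerates all such splits, retains a maximiser $N_1^*$, and combines the bookkeeping fields by $\min$ (for $SP$), by sum (for $SRD$) and by union (for $E$). By induction, $h(s,1:|S(s)|,N,\lambda)$ returned in line~2 is the optimal value of $(CIT_{H\infty}^{\lambda})$ and $\hat w_{(s,1:|S(s)|,N,\lambda)}$ realises it.

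For the running time I would invoke memoization so that each state $(v,\text{interval},N')$ is evaluated only once. At a node $v$ the intervals that can occur are $1:1,\dots,1:|S(v)|$ and $1:1,2:2,\dots,|S(v)|:|S(v)|$, i.e.\ $O(|S(v)|)$ of them; since every non-root node is the son of exactly one node, $\sum_v|S(v)|=n$, so there are $O(n)$ intervals in total, and together with the $N+1$ admissible residual budgets (a negative budget is absorbed by the base case at $O(1)$ cost) this yields $O(nN)$ nontrivial states. A state $(v,p:p,N')$ does $O(1)$ own work (one comparison of $h_1,h_2$ and constant-size field updates), whereas a state $(v,1:p,N')$ does $O(N)$ own work because of the loop over $N_1$ and the maximisation in \eqref{DITH_2eq_a}. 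Summing the own work, $\sum_v O(|S(v)|N)\cdot O(N)+\sum_v O(|S(v)|N)\cdot O(1)=O(nN^2)$; the $O(n)$ preprocessing to obtain $\bar w$, the sets $L(e)$ and the increments $In_w(e),In_{\bar w}(e)$ is dominated by this, and so is a single $O(n)$ final pass that reconstructs the returned weight vector from the recorded choices.

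The step I expect to be the main obstacle is not the recurrences, which are already established in Theorems~\ref{thm_cal_1}–\ref{thm_cal_2}, but the bookkeeping needed to certify that the four auxiliary quantities $SP_{(\cdot)}$, $SRD_{(\cdot)}$, $E_{(\cdot)}$, $\hat w_{(\cdot)}$ returned at every state are mutually consistent: that $\hat w_{(\cdot)}$ is feasible within budget $N'$ on the relevant $p:q$-subtree, that $SP_{(\cdot)}$ and $SRD_{(\cdot)}$ are exactly its StRD and SRD there, and that $h=\lambda\,SP_{(\cdot)}+(1-\lambda)\,SRD_{(\cdot)}$ — the optimality arguments of those theorems track only the scalar $h$, so this invariant must be carried through the induction by hand. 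A secondary point in the complexity count is making the $O(nN)$ bound on the memo table airtight, which requires observing that the residual budgets passed down the recursion are non-increasing and that, to stay within $O(nN^2)$, one must record only the local decision at each state (which edge status, which split $N_1^*$) and reconstruct the full upgrade set once at the end rather than building $E_{(\cdot)}$ and $\hat w_{(\cdot)}$ by $\Theta(n)$-size operations at every state.
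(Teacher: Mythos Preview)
Your proposal is correct and follows the same approach as the paper: correctness via the recurrences of Theorems~\ref{thm_cal_1} and~\ref{thm_cal_2}, and complexity via counting $O(nN)$ memoized states with $O(N)$ work each. The paper's own proof is a three-line sketch that treats only the complexity side and leaves correctness implicit, so your version is considerably more thorough --- in particular, your observations about the bookkeeping invariant for $SP,SRD,E,\hat w$ and about reconstructing $\hat w$ once at the end (rather than by $\Theta(n)$-size operations per state) plug gaps that the paper does not address.
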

\begin{proof}
     Given that $N$ is a predefined constant, for each node $v$ and each interger $0 \le N_1 \le N$, there is a distinct state \textbf{DFS$(v,1:|S(v)|,N_1,\lambda)$}. Consequently, the total number of states is bounded by $O(nN)$. For each state, \textbf{DFS} function delineated in Algorithm \ref{Alg_PIC_inf} requires $O(N)$ time. Therefore, the overall computational complexity for problem ($CIT_{H \infty}^{\lambda}$) is $O(nN^2)$.
\end{proof}

\subsubsection{A binary search algorithm to solve problem (DIT$_{H\infty}$)}

To solve problem (DIT$_{H\infty}$), we first analyse the connection between problem ($CIT_{H \infty}^{\lambda}$) and (DIT$_{H\infty}$), then propose the montonicity theorem of problem ($CIT_{H \infty}^{\lambda}$). Based on which, we develop a binary search algorithm.
 
Note that by setting $\lambda=1$ or $0$, we are able to derive optimal solutions for two planning problems: maximizing the StRD and maximizing SRD, subject to upper bound and weighted sum Hamming distance constraints, respectively. Then the following theorem concerning infeasibility can be obtained.

\begin{theorem} \label{Thm-DIT_Hinf}
If $h(s, 1:|S(s)|, N, 1) < M$, then problem (DIT$_{H\infty}$) is infeasible.
\end{theorem}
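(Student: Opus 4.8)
The plan is to show that $\lambda = 1$ gives the most favorable possible value of the StRD among all feasible upgrade schemes for the Hamming-budget-$N$ subproblem, so that if even this optimum fails to reach $M$, no feasible solution of (DIT$_{H\infty}$) can exist. First I would observe that when $\lambda = 1$, the objective of $(CIT_{H\infty}^{1})$ restricted to the whole tree $T = T_s(1:|S(s)|)$ reduces to $\min_{t_k \in Y} \hat w(P_k)$ subject only to the weighted-sum-Hamming constraint $\sum_{e \in E} r(e) H(\hat w(e), w(e)) \le N$ and the box constraint $w(e) \le \hat w(e) \le \bar w(e)$. Hence $h(s, 1:|S(s)|, N, 1)$ is, by definition, the maximum achievable StRD over exactly the set of weight vectors that satisfy the Hamming constraint and lie in $[w, \bar w]$ componentwise.

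Next I would argue that the feasible region of (DIT$_{H\infty}$) is contained in that set. Indeed, any $\tilde w$ feasible for (DIT$_{H\infty}$) satisfies $\sum_{e\in E} r(e) H(\tilde w(e), w(e)) \le N$ directly, and satisfies $w(e) \le \tilde w(e) \le u(e)$; combined with the $l_\infty$ constraint $\max_e c(e)(\tilde w(e) - w(e)) \le K$, which forces $\tilde w(e) \le w(e) + K/c(e)$, we get $\tilde w(e) \le \min\{w(e) + K/c(e), u(e)\} = \bar w(e)$ by the definition of $\bar w$ in (\ref{DIT_inf_ans}). So every feasible $\tilde w$ for (DIT$_{H\infty}$) is a feasible point of the $\lambda=1$ instance of $(CIT_{H\infty}^{1})$ on the whole tree, and therefore $\min_{t_k\in Y}\tilde w(P_k) \le h(s,1:|S(s)|,N,1)$.

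Finally, I would close the argument by contraposition: if (DIT$_{H\infty}$) were feasible, some $\tilde w$ would satisfy $\min_{t_k\in Y}\tilde w(P_k) \ge M$, whence $h(s,1:|S(s)|,N,1) \ge \min_{t_k\in Y}\tilde w(P_k) \ge M$, contradicting the hypothesis $h(s,1:|S(s)|,N,1) < M$. The one point requiring a little care — and the main (minor) obstacle — is verifying that the StRD used inside $(CIT_{H\infty}^{\lambda})$, namely $\min_{t_k \in \cup_{i} L(v_{s_i})} \hat w(P(v,t_k))$ evaluated at the root $v = s$, genuinely coincides with $\min_{t_k \in Y} \tilde w(P_k)$; this follows because $\cup_{i=1}^{|S(s)|} L(v_{s_i}) = Y$ and $P(s, t_k) = P_k$, so the two expressions are literally the same. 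With that identification, Theorem \ref{thm-CIT} guarantees $h(s,1:|S(s)|,N,1)$ is correctly computed by Algorithm \ref{Alg_PIC_inf}, and the infeasibility certificate is complete.
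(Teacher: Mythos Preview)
Your proposal is correct and follows essentially the same approach as the paper: both argue that at $\lambda=1$ the quantity $h(s,1{:}|S(s)|,N,1)$ equals the maximum achievable StRD over all upgrade schemes respecting the Hamming and box constraints, so if this value falls below $M$ the StRD constraint of (DIT$_{H\infty}$) cannot be met. The paper's own proof is a one-sentence sketch; your version simply fills in the details (the containment of feasible regions via $\tilde w(e)\le \bar w(e)$ and the identification $\bigcup_i L(v_{s_i})=Y$) that the paper leaves implicit.
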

\begin{proof}
Note that $h(s, 1:|S(s)|, N, 1)=SP_{(s, 1:|S(s)|, N, 1)} < M$, then there is no set of edges capable of extending the StRD to meet or exceed $M$, which means the problem is infeasible.
\end{proof}

Analysing problem (DIT$_{H\infty}$), we need to strike a balance between these objectives StRD and SRD. Specifically, within the original constraints, we seek to establish a lower bound for the StRD while simultaneously maximizing SRD. Given that the objective function $h$, comprises two components, we observe that as the parameter $\lambda$ varies from 1 to 0, the contribution of the StRD to $h$ steadily diminishes, while the contribution of the SRD progressively increases. Despite the inherent discontinuity imposed by the weighted sum Hamming distance constraints, as $\lambda$ changes, the algorithm becomes biased in the selection of update edges, which allows us to draw the following theorems.

\begin{theorem} Let $1\ge \lambda_1 \ge \lambda_2\ge 0$, then $SP_{(s,1:|S(s)|,N,\lambda_1)} \ge SP_{(s,1:|S(s)|,N,\lambda_2)}$, $SRD_{{(s,1:|S(s)|,N,\lambda_1)}} \le SRD_{{(s,1:|S(s)|,N,\lambda_2)}}$.
\end{theorem}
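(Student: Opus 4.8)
The plan is to prove both monotonicity statements together by exploiting the optimality of the solutions defining $h(s,1{:}|S(s)|,N,\lambda_1)$ and $h(s,1{:}|S(s)|,N,\lambda_2)$, played off against each other. Write $\hat w_1:=\hat w_{(s,1:|S(s)|,N,\lambda_1)}$ and $\hat w_2:=\hat w_{(s,1:|S(s)|,N,\lambda_2)}$ for the two optimal edge-weight vectors, and abbreviate $SP_i:=SP_{(s,1:|S(s)|,N,\lambda_i)}$, $SRD_i:=SRD_{(s,1:|S(s)|,N,\lambda_i)}$. Both $\hat w_1$ and $\hat w_2$ are feasible for the \emph{same} feasible region of $(CIT_{H\infty}^{\lambda})$ on $T_s(1{:}|S(s)|)$, since that region — the bounds $w(e)\le\hat w(e)\le\bar w(e)$ and the Hamming-budget $\sum r(e)H(\hat w(e),w(e))\le N$ — does not depend on $\lambda$. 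Only the objective depends on $\lambda$.

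First I would write down the two optimality inequalities. Since $\hat w_1$ maximizes the $\lambda_1$-objective and $\hat w_2$ is feasible,
\begin{equation}
(1-\lambda_1)SRD_1+\lambda_1 SP_1 \ \ge\ (1-\lambda_1)SRD_2+\lambda_1 SP_2 .\nonumber
\end{equation}
Since $\hat w_2$ maximizes the $\lambda_2$-objective and $\hat w_1$ is feasible,
\begin{equation}
(1-\lambda_2)SRD_2+\lambda_2 SP_2 \ \ge\ (1-\lambda_2)SRD_1+\lambda_2 SP_1 .\nonumber
\end{equation}
Rearranging the first gives $\lambda_1(SP_1-SP_2)\ge(1-\lambda_1)(SRD_2-SRD_1)$, and rearranging the second gives $(1-\lambda_2)(SRD_2-SRD_1)\ge\lambda_2(SP_1-SP_2)$. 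Then I would add these two inequalities in a way that cancels the cross terms: multiply the first by $(1-\lambda_2)$ and the second by $\lambda_1$ (both nonnegative) and add, or more cleanly, observe that combining them yields $\lambda_1(1-\lambda_2)(SP_1-SP_2)\ge\lambda_2(1-\lambda_1)(SP_1-SP_2)$, i.e. $\big(\lambda_1-\lambda_2\big)(SP_1-SP_2)\ge 0$ after collecting terms (using $\lambda_1(1-\lambda_2)-\lambda_2(1-\lambda_1)=\lambda_1-\lambda_2\ge 0$). Since $\lambda_1\ge\lambda_2$, this forces $SP_1\ge SP_2$, which is the first claim. Symmetrically, eliminating the $SP$ terms instead of the $SRD$ terms yields $(\lambda_1-\lambda_2)(SRD_2-SRD_1)\ge 0$, hence $SRD_2\ge SRD_1$, the second claim. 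A minor edge case to dispose of: if $\lambda_1=\lambda_2$ the statement is trivial (or one notes ties may be broken consistently), and if the problem is infeasible for $\lambda=1$ one is in the situation of Theorem \ref{Thm-DIT_Hinf} and there is nothing to prove here.

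The main obstacle I anticipate is not the algebra — the exchange argument above is standard for parametric/weighted objectives — but rather justifying that the feasible set is genuinely $\lambda$-independent and that $SP$ and $SRD$ as defined are the same functionals of $\hat w$ for every $\lambda$ (so that a solution feasible/evaluated at one $\lambda$ can be plugged into another). This needs a sentence pointing back to the definition of $(CIT_{H\infty}^{\lambda})$ in (\ref{PIC_inf}) and to the definition of $SP_{(v,p:q,N,\lambda)}$, $SRD_{(v,p:q,N,\lambda)}$ as $\min_{t_k}\hat w(P(v,t_k))$ and $\sum_e|L(e)|\hat w(e)$ respectively. A secondary subtlety is that $\hat w_1,\hat w_2$ are only \emph{some} optimal solutions, not unique; but since the inequalities above hold for \emph{any} choice of optimal solutions, the conclusion about the \emph{values} $SP_i$ and $SRD_i$ is well defined only if these values are themselves uniquely determined — so I would either add the mild assumption (consistent with the algorithm's deterministic tie-breaking) that $SP_{(s,\cdot,N,\lambda)}$ and $SRD_{(s,\cdot,N,\lambda)}$ denote the specific values returned by Algorithm \ref{Alg_PIC_inf}, or phrase the theorem as: there exist optimal solutions realizing the monotone ordering. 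With that caveat noted, the proof is short.
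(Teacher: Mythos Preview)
Your proof is correct and is cleaner than the paper's. Both arguments share the second half: from the optimality of $\hat w_2$ at $\lambda_2$ one gets
\[
(1-\lambda_2)(SRD_2-SRD_1)\ \ge\ \lambda_2(SP_1-SP_2),
\]
and once $SP_1\ge SP_2$ is known this immediately yields $SRD_1\le SRD_2$. The difference lies in how $SP_1\ge SP_2$ is obtained. The paper argues it by a case split on whether the realized shortest root--leaf paths under $\hat w_1$ and $\hat w_2$ coincide, reasoning informally that as $\lambda$ grows the marginal value $\lambda/(1-\lambda)$ of upgrading edges on the shortest path grows, so the $\lambda_1$-solution ``prefers'' such edges and hence lengthens the shortest path at least as much. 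You instead invoke the \emph{second} optimality inequality (that of $\hat w_1$ at $\lambda_1$) and combine it algebraically with the first to obtain $(\lambda_1-\lambda_2)(SP_1-SP_2)\ge 0$, which is the standard exchange argument for weighted-sum scalarizations. Your route is shorter, fully rigorous, and sidesteps the need to track which leaf realizes the minimum; the paper's route is more intuitive about \emph{why} the monotonicity holds at the level of edge choices but is harder to make airtight. Your remark about tie-breaking (that $SP_i,SRD_i$ are only well defined once a particular optimizer is fixed, e.g.\ the one returned by Algorithm~\ref{Alg_PIC_inf}) is a genuine point that the paper leaves implicit.
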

\begin{proof}
Let $w_1:=\hat w_{(s,1:|S(s)|,N,\lambda_1)}$ and $w_2:=\hat w_{(s,1:|S(s)|,N,\lambda_2)}$ be optimal solutions with respect to $\lambda_1$ and $\lambda_2$, respectively. Let $P_1$ and $P_2$  be the corresponding shortest path under $w_1$ and  $w_2$,  respectively. Let $SP_{w_i}=SP_{(s,1:|S(s)|,N,\lambda_i)}$ and $SRD_{w_i}=SRD_{(s,1:|S(s)|,N,\lambda_i)},i=1,2$ for simplicity.


By Theorem \ref{thm-DIT-NPH}, problem ($CIT_{H \infty}^{\lambda}$) can be transformed to a 0-1 knapsack problem ($TCIT_{H \infty}^{\lambda}$), for convenience, we divide $1-\lambda$ in both terms of the objective function. Let $x(e)=1,0$ represent whether the weight of  edge $e$ is upgraded to $\bar w(e)$ or not. Then we have
\begin{eqnarray}
&\max \limits & \left\{  \sum_{e \in E} |L(e)|(x(e)(\bar w(e)-w(e))+w(e))  +\nn \right. \\ 
&&\left. \frac{\lambda}{1-\lambda} \min \limits_{t_k \in Y } \sum_{e \in P_k} (x(e)(\bar w(e)-w(e))+w(e)) \right\} \nn\\
(\textbf{TCIT}_{H\infty}^{\lambda}) 
&s.t.&   \sum_{e\in E}r(e)x(e)\le N, \nn \\
&& x(e)=1,0.
\end{eqnarray}
There are two cases to be considered.

\textbf{Case 1: $P_1 = P_2$.} In this case, the increment of the objective function resulting from upgrading edge $e$ on path $P_1$ is monotonically increasing as $\lambda$ varies from 0 to 1 since $\frac{\lambda}{1-\lambda}$ is monotonically increasing, while the increment of the objective function by upgrading other edges not on $P_1$ keeps still. When $w_1 = w_2$, the theorem holds trivially. Suppose $w_1 \neq w_2$. Since $w_2$ is the optimal solution when $\lambda=\lambda_2$, in order for $w_1$ to surpass $w_2$, it must select edges that can increase higher values. Consequently, more edges on $P_1$ will be chosen under $w_1$, leading to $SP_{w_1} > SP_{w_2}$.

\textbf{Case 2: $P_1 \neq P_2$.} Following the insights from Case 1, where identical shortest paths result in a non-decreasing StRD as $\lambda$ increases, the scenario where $P_1 \neq P_2$ implies that enhancements to $P_1$ have been so substantial that it is no longer the shortest path, whereas $P_2$ becomes so. Under these circumstances, $SP_{w_1} > SP_{w_2}$ is ensured. 

In both cases, we get $SP_{w_1} \ge SP_{w_2}$, now we show $SRD_{w_2} \ge SRD_{w_1}$. By the optimality of $w_2$, we have 
\begin{eqnarray}
    \lambda_2 SRD_{w_1}+(1-\lambda_2)SP_{w_1}\le \lambda_2 SRD_{w_2}+(1-\lambda_2)SP_{w_2}\label{eq_opt_1}
\end{eqnarray}
Rearrange the inequality \eqref{eq_opt_1}, then we have
\begin{equation}\label{eq_opt_2}
    \lambda_2 (\text{SRD}_{w_1} - \text{SRD}_{w_2}) \leq (1-\lambda_2)(\text{SP}_{w_2} - \text{SP}_{w_1})
\end{equation}

Since \( (1-\lambda_2) \geq 0 \) and \( \text{SP}_{w_2} \leq \text{SP}_{w_1} \), the right-hand side of \eqref{eq_opt_2} is non-positive, thus:
\begin{equation}
    \lambda_2 (\text{SRD}_{w_1} - \text{SRD}_{w_2}) \leq 0, \nn
\end{equation}
which leads to
\[
\text{SRD}_{w_1} \leq \text{SRD}_{w_2}.
\]
This completes the proof.
\end{proof}

\begin{theorem} \label{thm-lambda-13}
 If $h(s, 1:|S(s)|, N, 1) \ge M$, then the optimal solution of the problem (DIT$_{H\infty}$) is given by $\hat w_{(s, 1:|S(s)|, N, \lambda^*)}$, where $\lambda^*$ is the critical value with $0\leq \lambda^*\leq 1$.
\end{theorem}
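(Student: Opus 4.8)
The plan is to exploit the monotonicity theorem just established together with the infeasibility characterization (Theorem \ref{Thm-DIT_Hinf}) to pin down the correct value $\lambda^*$ by a threshold argument. First I would recall that for a feasible instance we have $h(s,1:|S(s)|,N,1)=SP_{(s,1:|S(s)|,N,1)}\ge M$, so the solution $\hat w_{(s,1:|S(s)|,N,1)}$ already meets the StRD constraint; hence the feasible region of (DIT$_{H\infty}$) in the space of Hamming-admissible weight vectors is nonempty. The goal is to show that among all such vectors, the one maximizing SRD is produced by ($CIT_{H\infty}^{\lambda}$) for a suitable $\lambda$.

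Next I would define $\lambda^*:=\inf\{\lambda\in[0,1]: SP_{(s,1:|S(s)|,N,\lambda)}\ge M\}$. By the monotonicity theorem, $SP_{(s,1:|S(s)|,N,\lambda)}$ is nondecreasing in $\lambda$, so this set is an interval with right endpoint $1$ (nonempty by feasibility), and $\lambda^*$ is well-defined with $0\le\lambda^*\le 1$. The two claims to verify are: (a) $\hat w_{(s,1:|S(s)|,N,\lambda^*)}$ is feasible for (DIT$_{H\infty}$), i.e.\ $SP_{(s,1:|S(s)|,N,\lambda^*)}\ge M$; and (b) no Hamming-admissible weight vector $\tilde w$ with $\min_{t_k\in Y}\tilde w(P_k)\ge M$ has $\tilde w(T)>SRD_{(s,1:|S(s)|,N,\lambda^*)}$. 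For (a) I would argue by the finiteness of the breakpoint structure: because the weighted sum Hamming distance is discrete, as $\lambda$ ranges over $[0,1]$ only finitely many distinct edge-subsets can ever be optimal for ($CIT_{H\infty}^{\lambda}$) (each is the argmax of a function that is linear in $\tfrac{\lambda}{1-\lambda}$ over a finite family), so $SP_{(s,1:|S(s)|,N,\lambda)}$ as a function of $\lambda$ is a nondecreasing step function taking finitely many values; hence the infimum $\lambda^*$ is attained, possibly needing to take the value at $\lambda^*$ itself from the right-continuous branch, giving $SP_{(s,1:|S(s)|,N,\lambda^*)}\ge M$. For (b), take any feasible $\tilde w$; its corresponding $x$-vector is Hamming-admissible, and since it satisfies the StRD bound, it is a feasible solution of ($CIT_{H\infty}^{\lambda^*}$) as well. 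By optimality of $\hat w_{(s,1:|S(s)|,N,\lambda^*)}$ for ($CIT_{H\infty}^{\lambda^*}$),
\[
(1-\lambda^*)\,\tilde w(T)+\lambda^*\min_{t_k}\tilde w(P_k)\ \le\ (1-\lambda^*)\,SRD_{(s,1:|S(s)|,N,\lambda^*)}+\lambda^*\,SP_{(s,1:|S(s)|,N,\lambda^*)}.
\]
Then I would use the defining (minimality) property of $\lambda^*$: just below $\lambda^*$ the StRD of the ($CIT_{H\infty}^{\lambda}$)-optimum drops below $M$, which forces $SP_{(s,1:|S(s)|,N,\lambda^*)}$ to be essentially at the threshold $M$, and more importantly that $\hat w_{(s,1:|S(s)|,N,\lambda^*)}$ is the SRD-maximizer among all vectors whose StRD is at least that threshold value — any vector with strictly larger SRD would, combined with the convex objective, have been selected by ($CIT_{H\infty}^{\lambda}$) for $\lambda$ slightly less than $\lambda^*$ while still having StRD $\ge M$, contradicting minimality of $\lambda^*$. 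Comparing SRD values in the displayed inequality and accounting for the $\lambda^*\cdot SP$ terms (both $\ge\lambda^* M$) yields $\tilde w(T)\le SRD_{(s,1:|S(s)|,N,\lambda^*)}$, establishing optimality.

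The main obstacle I anticipate is the boundary bookkeeping at $\lambda^*$: because ($CIT_{H\infty}^{\lambda}$) may have multiple optimal solutions at the breakpoint $\lambda=\lambda^*$ (one inherited from the $\lambda<\lambda^*$ side with StRD $<M$ and one from the $\lambda>\lambda^*$ side with StRD $\ge M$), I must argue carefully that the dynamic program returns — or can be made to return by a consistent tie-breaking rule favoring larger StRD — the branch with $SP\ge M$, and that this particular branch simultaneously maximizes SRD over the $M$-feasible set. This requires the finite-breakpoint/step-function observation above to be made rigorous, and a short lexicographic tie-breaking remark. The convexity comparison and the monotonicity theorem do the rest of the work routinely; the delicate point is purely the behavior exactly at the critical parameter.
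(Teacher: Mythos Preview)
Your proposal is correct and follows essentially the same route as the paper: both use the monotonicity of $SP$ and $SRD$ in $\lambda$ to locate a critical $\lambda^*$, then argue optimality via the convex-combination inequality for $(CIT_{H\infty}^{\lambda^*})$ together with the threshold property of $\lambda^*$. Your version is in fact more explicit than the paper's---you define $\lambda^*$ as an infimum and flag the tie-breaking subtlety at the breakpoint, whereas the paper's concluding step (``$\lambda^*$ is the threshold, there is no room for the StRD of $w^*$ to decrease'') remains informal.
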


\begin{proof}
When \(h(s, 1:|S(s)|, N, 1) \ge M\), then the problem is feasible by Theorem \ref{Thm-DIT_Hinf}. 

Notice that the StRD is non-increasing and the SRD is non-decreasing as $\lambda$ changes from $1$ to $0$.  Besides, the StRD and SRD reaches the maximal at $\lambda=1$ and $0$, respectively. Hence,  a solution $\hat w_{(s, 1:|S(s)|, N, \lambda^*)}$ emerges at a critical threshold denoted as $\lambda^*$, which not only adheres to the StRD constraint but also optimally maximizes SRD. Then $w^*:=\hat w_{(s, 1:|S(s)|, N, \lambda^*)}$ is the optimal solution for problem (DIT$_{H\infty}$).

Suppose there exists a better solution $\tilde w$ than $w^*$ with $\tilde w(T) > w^*(T)$. Observed that $\tilde w$ also satisfies the feasibility of problem ($CIT_{H \infty}^{\lambda^*}$) and $w^*$ have the largest $h$ value under feasibility by Theorem \ref{thm-CIT}, then 
$$\lambda^* \min _{t_k \in Y} \tilde w(P_k) +(1-\lambda^*)\tilde w(T) \le \lambda^* \min _{t_k \in Y} w^*(P_k) +(1-\lambda^*) w^*(T), \nn$$ which leads to $\min _{t_k \in Y} \tilde w(P_k)< \min _{t_k \in Y} w^*(P_k)$ since $\tilde w(T) > w^*(T)$.
Note that $\lambda$ is continuous and $\lambda^*$ is the threshold,
 there is no room for the StRD of $w^*$ to decrease, which is a contradiction. 
\end{proof}
 

Finally, we present Algorithm \ref{Alg_DITH_inf} to solve (DIT$_{H\infty}$), encapsulating our findings. This algorithm leverages a binary search method to determine the critical value $\lambda^*$ corresponding to Theorem \ref{thm-lambda-13}. Then we call Algorithm \ref{Alg_PIC_inf} to solve problem (CIT$_{H \infty}^{\lambda^*}$), in which   \textbf{DFS}$(s, 1:|S(s)|, N, \lambda^*)$ is utilized.

\begin{algorithm}
	\caption{A binary search algorithm to solve (DIT$_{H\infty}$).}
 \label{Alg_DITH_inf}
	\small
	\begin{algorithmic}[1]
		\Require A tree $T$ rooted at $s$, two given values $N, K$, two edge weight vectors $w$ and $u$.
		\Ensure An optimal solution $\tilde{w}$, the optimal value $\tilde{w}(T)$.
            \State Let $U:=\min_{t_k \in Y} u(P_k) + \sum_{e \in E} |L(e)| u(e)$.
            \State Calculate $\bar w$ by Equation (\ref{DIT_inf_ans}), let $p^1:=SP_{(s, 1:|S(s)|, N, 1)}$ by calling \textbf{DFS}$(s, 1:|S(s)|, N, 1)$  in Algorithm \ref{Alg_PIC_inf}.
            \If{$ p^1< M$}
            \State \Return No feasible solution.
            \Else
            \State Let $lr:=0, rr:=1$.
            \While{$rr-lr> \frac{1}{U^2}$}
            \State Let $mid:= \frac{(lr+rr)}{2}$, $p^{mid}:=SP_{(s, 1:|S(s)|, N, mid)}$  by calling \textbf{DFS}$(s, 1:|S(s)|, N, mid)$ in Algorithm \ref{Alg_PIC_inf}.
            \If{$p^{mid}>M$ }
                \State \(rr:=mid\)
                 \Else
                    \State \(lr:= mid\)
                \EndIf
            \EndWhile
            \State Let $\lambda^*:=rr$. Call \textbf{DFS}$(s, 1:|S(s)|, N, \lambda^*)$.
            \State \Return $\tilde{w}:=\hat w_{(s, 1:|S(s)|, N, \lambda^*)}, \tilde{w}(T):=SRD_{(s, 1:|S(s)|, N, \lambda^*)}$.
            \EndIf
	\end{algorithmic}
\end{algorithm}

\begin{theorem}\label{thm:14}
    Let U:=$\min_{t_k \in Y}  u(P_k)+\sum_{e\in E}|L(e)| u(e)$. When $N \ge 1$,  Algorithm \ref{Alg_DITH_inf} can solve problem (DIT$_{H\infty}$) within $O(nN^2\log_2 U)$ time.
\end{theorem}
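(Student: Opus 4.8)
The plan is to account for the running time of Algorithm \ref{Alg_DITH_inf} by splitting it into its three structural phases: (i) the initial feasibility test, (ii) the binary search loop over $\lambda$, and (iii) the final reconstruction of the optimal solution. For phase (i), computing $\bar w$ by Equation (\ref{DIT_inf_ans}) costs $O(n)$, and evaluating $p^1 = SP_{(s,1:|S(s)|,N,1)}$ is one call to \textbf{DFS}$(s,1:|S(s)|,N,1)$, which by Theorem \ref{thm-CIT} runs in $O(nN^2)$ time; if $p^1 < M$ we return infeasibility by Theorem \ref{Thm-DIT_Hinf} and stop. Phase (iii) is again a single \textbf{DFS} call at $\lambda=\lambda^*$, i.e.\ $O(nN^2)$. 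So the whole cost is dominated by phase (ii), and the statement reduces to bounding the number of binary-search iterations by $O(\log_2 U)$ and multiplying by the $O(nN^2)$ per-iteration cost of Algorithm \ref{Alg_PIC_inf}.

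The iteration count is governed by the stopping condition $rr - lr > \frac{1}{U^2}$: starting from the interval $[0,1]$ of length $1$ and halving each round, after $t$ rounds the interval has length $2^{-t}$, so the loop terminates once $2^{-t} \le \frac{1}{U^2}$, i.e.\ after $t = \lceil \log_2 U^2 \rceil = O(\log_2 U)$ iterations. Each iteration performs one \textbf{DFS}$(s,1:|S(s)|,N,mid)$ call, which is $O(nN^2)$ by Theorem \ref{thm-CIT}, plus $O(1)$ comparison and update work. Hence phase (ii) costs $O(nN^2 \log_2 U)$, which subsumes phases (i) and (iii), giving the claimed $O(nN^2\log_2 U)$ bound. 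Since $N\ge 1$, the $O(nN^2)$ terms are well-defined and nonzero, so no degenerate case arises.

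The one genuine obstacle is to argue that the threshold precision $\frac{1}{U^2}$ is \emph{sufficient} — that terminating the search at this granularity actually returns $\lambda^*$ (equivalently, the solution $\hat w_{(s,1:|S(s)|,N,\lambda^*)}$ promised by Theorem \ref{thm-lambda-13}), so that the algorithm is correct and not merely fast. The key point is that $h(v,p:q,N,\lambda)$ is, for fixed combinatorial data, a piecewise-linear function of $\lambda$ whose breakpoints are determined by equalities among $\lambda$-combinations of the $SP$ and $SRD$ values; all edge weights $w(e), \bar w(e) \le u(e)$ and all leaf counts $|L(e)|$ are integers (or rationals) bounded in terms of $U$, so the $SP$ and $SRD$ quantities are integers bounded by $U$, and any two distinct breakpoints of the relevant comparison functions differ by at least $\frac{1}{U^2}$ (a difference of two fractions with numerators and denominators bounded by $U$). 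Consequently, once the search interval has width below $\frac{1}{U^2}$ it contains at most one breakpoint, and the endpoint $rr$ returned lies in the same linear piece as $\lambda^*$; since on that piece the optimal edge set — and hence $\hat w$ and $\tilde w(T) = SRD_{(s,1:|S(s)|,N,\lambda^*)}$ — is constant, the output coincides with the optimal solution of (DIT$_{H\infty}$). I would state this separation bound explicitly as the crux of the correctness half of the argument, then close with the complexity accounting above.
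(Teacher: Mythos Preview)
Your proposal is correct and follows essentially the same route as the paper: both argue the running time as $O(\log_2 U)$ binary-search rounds at $O(nN^2)$ each via Theorem~\ref{thm-CIT}, and both justify the $\frac{1}{U^2}$ stopping tolerance by observing that $h$ is piecewise linear in $\lambda$ with breakpoints arising as intersections of lines whose slopes and intercepts are integers bounded by $U$, so consecutive breakpoints are at least $\frac{1}{U^2}$ apart. Your explicit decomposition into phases (i)--(iii) is a bit tidier than the paper's write-up, but the substance of the argument is the same.
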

\begin{proof}
    By Theorem \ref{thm-lambda-13}, there exists a critical value $\lambda^*\in (lr^*,rr^*)$ that gives the optimal solution. To be specific, there exists a small interval $[a,b] $ satisfying $(lr^*,rr^*)\subset [a,b] \subset [0,1]$ such that $\hat{w}_{(s,1:|S(s)|,N,\lambda^*)}=\hat{w}_{(s,1:|S(s)|,N,\lambda)}$ holds for any $\lambda \in [a,b]$. Therefore, suppose $b-a=\epsilon$, for the binary search process, it takes at most $O(|\log_2 \epsilon|)$ iterations and it runs $O(nN^2)$ in each iteration by Theorem \ref{thm-CIT}. Hence the time complexity is $O(nN^2|\log_2 \epsilon|)$.
    
    Here we give a lower bound for $\epsilon$.
    Without loss of generosity, let us assume $w$ and $\bar w$ are integer vectors since they are represented as decimal in calculation. Then, for problem ($CIT_{H \infty}^{\lambda}$) in \eqref{PIC_inf}, $h$ is a piecewise linear function of $\lambda$ coloured in red as shown in Fig.  \ref{fig:prove}.
    \begin{figure}[htbp]
    \centering
    \includegraphics[width=0.3\textwidth]{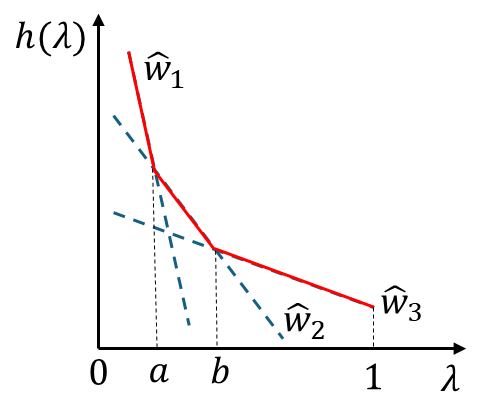}
    \caption{The relationship of $\lambda$ and $h(\lambda)$.}
    \label{fig:prove}
    \end{figure}

Each feasible edge upgrade vector $\hat{w}_i$ can be represented as a non-increasing line segment by
\begin{equation}
    h( \lambda) = \left(\min_{t_k \in Y} \hat{w}_i(P_k) - \sum_{e \in E} |L(e)| \hat{w}_i(e)\right) \lambda + \sum_{e \in E} |L(e)| \hat{w}_i(e).
\end{equation}

By analyzing, we can obtain
\begin{eqnarray*}
  &&  -\sum_{e \in E} |L(e)| \bar{w}(e) < a_i:=\min_{t_k \in Y} \hat{w}_i(P_k) - \sum_{e \in E} |L(e)| \hat{w}_i(e) < \min_{t_k \in Y} \bar{w}(P_k), \\
 &&   0 \leq b_i:=\sum_{e \in E} |L(e)| \hat{w}_i(e) \leq \sum_{e \in E} |L(e)| \bar{w}(e).
\end{eqnarray*}

For each $\lambda \in [0, 1]$, it corresponds to a problem ($CIT_{H \infty}^{\lambda}$) where the optimal solution output by Algorithm \ref{Alg_PIC_inf} is the highest among all feasible edge weight vectors. For any two lines, we have
\begin{eqnarray*}
    h_i(\lambda) = a_i \lambda + b_i, \\
    h_j(\lambda) = a_j \lambda + b_j.
\end{eqnarray*}
The horizontal coordinate of their intersection point is
\[ \lambda_1 = \frac{b_j - b_i}{a_i - a_j}. \]

Similarly, let \(\lambda_2 = \frac{b_j' - b_i'}{a_i' - a_j'}\) represents the horizontal coordinate of another intersection point. Consequently, we get a lower bound of $b-a$ as
\[
b - a \geq \frac{b_j - b_i}{a_i - a_j} - \frac{b_j' - b_i'}{a_i' - a_j'} \geq \frac{1}{(a_i - a_j)(a_i' - a_j')} > \frac{1}{(\min_{t_k \in Y} \bar{w}(P_k) + \sum_{e \in E} |L(e)| \bar{w}(e))^2}.
\]

We also know:
\[
\min_{t_k \in Y} \bar{w}(P_k) + \sum_{e \in E} |L(e)| \bar{w}(e) \leq U := \min_{t_k \in Y} u(P_k) + \sum_{e \in E} |L(e)| u(e).
\]

Therefore, we establish
\[
|\log_2(b - a)| > |\log_2 \frac{1}{U^2}|,
\]
from which it follows that the algorithm terminates within $O(nN^2 |\log_2 \frac{1}{U^2}|)=O(2nN^2 \log_2 U)=O(nN^2 \log_2 U)$. This runtime is classified as pseudo-polynomial because \(N\) is dependent on the input provided. 
Specifically, when \( r(e) = 1, N < n \), the weighted sum of Hamming distances becomes a cardinality constraint. This algorithm runs in \( O(n^3 \log_2 U) \) time, making it a polynomial-time algorithm.
\end{proof}

\section{The $\mathcal{NP}$-Hardness of (MCDITH) under the Weighted $l_\infty$ Norm}

Next, we consider its related minimum cost problem (MCDIT$_{H\infty}$) by exchanging the objective function and the $l_\infty$ norm of problem (DIT$_{H\infty}$), which is formulated in (\ref{MCDITH_inf}).
To prove (MCDIT$_{H\infty}$) is $\mathcal{NP}$-hard, it suffices to show that its decision version (DMCDIT$_{H\infty}$) is $\mathcal{NP}$-complete. Typically, proving a decision problem is $\mathcal{NP}$-complete involves two steps: first, demonstrating that the decision version is in $\mathcal{NP}$; second, showing that the decision version can be reduced from a problem already proven to be $\mathcal{NP}$-complete. For our purposes, we choose the decision version of the 0-1 knapsack problem as the original problem \cite{Combia book}. This leads to the following theorem.

\begin{theorem}
    The problem (MCDIT$_{H\infty}$)  is $\mathcal{NP}$-hard.
\end{theorem}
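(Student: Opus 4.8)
The plan is to follow the usual two steps for establishing $\mathcal{NP}$-completeness of the decision version, which I will call (DMCDIT$_{H\infty}$), and then deduce $\mathcal{NP}$-hardness of the optimization problem. First I would make the decision version precise: given a tree $T=(V,E,w,u,c,r)$ and numbers $C^{*}, M, D, N$, decide whether there exists $\tilde w$ with $w(e)\le \tilde w(e)\le u(e)$ for every $e\in E$, $\max_{e\in E}c(e)(\tilde w(e)-w(e))\le C^{*}$, $\min_{t_k\in Y}\tilde w(P_k)\ge M$, $\tilde w(T)\ge D$, and $\sum_{e\in E}r(e)H(\tilde w(e),w(e))\le N$. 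Membership in $\mathcal{NP}$ is straightforward: the vector $\tilde w$ (equivalently, the set of upgraded edges together with their new values) is a polynomial-size certificate, and all five conditions can be checked in $O(nm)$ time by forming the path weights $\tilde w(P_k)$. Moreover, by the same reasoning used in Theorem~\ref{DIT_ans_theorem} and Theorem~\ref{GDITH_is_equal_to_DITH}, for a YES-instance we may assume each upgraded edge $e$ is pushed all the way to $\bar w(e):=\min\{w(e)+C^{*}/c(e),u(e)\}$, since this only increases every $\tilde w(P_k)$ and $\tilde w(T)$ while changing neither the $l_\infty$ cost (which stays $\le C^{*}$) nor the Hamming count; hence a subset $F\subseteq E$ of upgraded edges already serves as a certificate.

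For the reduction I would start from the decision version of the 0-1 knapsack problem: given positive integers $p_1,\dots,p_n$, $c_1,\dots,c_n$, a capacity $R$ and a target $P$, decide whether some $S\subseteq\{1,\dots,n\}$ has $\sum_{i\in S}c_i\le R$ and $\sum_{i\in S}p_i\ge P$. I would build a star $T$ rooted at $s$ with leaves $t_1,\dots,t_n$ and edges $e_i=(s,t_i)$, and set $w(e_i)=0$, $u(e_i)=p_i$, $c(e_i)=1$, $r(e_i)=c_i$, together with $C^{*}:=\max_i p_i$ (so that $\bar w(e_i)=p_i$ for all $i$), $M:=0$, $D:=P$ and $N:=R$. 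Because $|L(e_i)|=1$, here $\tilde w(P_i)=\tilde w(e_i)$, $\tilde w(T)=\sum_i\tilde w(e_i)$, and $\min_i\tilde w(P_i)\ge 0=M$ holds automatically, so only the SRD bound and the Hamming-sum bound are active. I would then verify the two directions: from a knapsack-feasible $S$, the vector $\tilde w(e_i)=p_i$ for $i\in S$ and $\tilde w(e_i)=0$ otherwise meets all constraints of (DMCDIT$_{H\infty}$); conversely, from a feasible $\tilde w$ set $S:=\{i:\tilde w(e_i)>0\}$, so that $\sum_{i\in S}c_i=\sum_{e\in E}r(e)H(\tilde w(e),w(e))\le N=R$ and $\sum_{i\in S}p_i\ge\sum_i\tilde w(e_i)=\tilde w(T)\ge D=P$. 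This shows the knapsack instance is a YES-instance iff the constructed instance is, and the transformation is clearly polynomial, so (DMCDIT$_{H\infty}$) is $\mathcal{NP}$-complete and therefore (MCDIT$_{H\infty}$) is $\mathcal{NP}$-hard.

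The main obstacle I anticipate is not any single computation but getting the modeling exactly right, in two respects. The first is the normalization observation above, namely that in the decision version the $l_\infty$ budget $C^{*}$ can be spent in full on every upgraded edge, which is what lets the problem collapse to a pure subset-selection problem; this is the $\mathcal{NP}$-side analogue of Theorem~\ref{GDITH_is_equal_to_DITH} and needs a short but careful argument. The second is choosing the parameters and the underlying tree so that exactly one constraint (the weighted Hamming sum $\sum_{e}r(e)x(e)\le N$) plays the role of the knapsack capacity and exactly one constraint (the SRD bound $\tilde w(T)\ge D$) plays the role of the profit target, while both the $l_\infty$-cost constraint and the StRD constraint $\min_{t_k}\tilde w(P_k)\ge M$ are rendered slack; this is precisely why I use a star with $M=0$, since on any larger tree an unupgraded leaf edge could force $\min_{t_k}\tilde w(P_k)=0$ and thereby entangle the StRD constraint with the choice of $S$. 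Once these are pinned down, both directions of the equivalence are routine verifications.
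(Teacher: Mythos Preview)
Your proposal is correct and follows essentially the same approach as the paper: show the decision version lies in $\mathcal{NP}$, then reduce from the decision version of 0\nobreakdash-1 knapsack by choosing parameters so that the $l_\infty$ and StRD constraints become slack and only the weighted Hamming-sum constraint (playing the role of capacity) and the SRD constraint (playing the role of the profit target) remain active. The only cosmetic difference is the gadget---you use a star while the paper uses a chain rooted at $s$; in both cases $|L(e)|=1$ for every edge, and your write-up is in fact more explicit about the parameter choices and the two directions of the equivalence than the paper's own argument.
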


\begin{proof}
We prove the theorem in two steps.

\textbf{Step 1:} The decision version of the (MCDIT$_{H\infty}$) problem is in $\mathcal{NP}$.

The decision version of (MCDIT$_{H\infty}$) can be stated as follows: Given a maximal cost $K$, determine whether there exists an edge weight vector $\hat{w}$ that satisfies the following constraints:

\begin{eqnarray}
    &&\min 1 \nonumber\\
    &\text{s.t.}& \hat{w}(T) \geq D, \nonumber\\
   (DMCDITH_\infty) && \min \limits_{t_k \in Y} \hat{w}(P_k) \ge M, \label{DMCDITH} \\
    && \sum_{e\in E} r(e)H(\hat{w}(e), w(e)) \le N, \nonumber\\
    && \max_{e\in E}c(e) (\hat w(e) - w(e)) \le K, \nonumber\\
    && w(e) \leq \hat{w}(e) \leq u(e), \quad e \in E. \nonumber
\end{eqnarray}

Note that under the maximal cost $K$, the weight of one edge is constrained to the upper bound in Equation (\ref{DIT_inf_ans}), thus the $u(e)$ in the constraint can be replaced by $\bar{w}(e)$. By the definition of an $\mathcal{NP}$ problem, given the vector $\bar{w}$, one can easily verify whether the vector satisfies the constraints in $O(n)$ time, as it only requires basic vector operations.

\textbf{Step 2:} Problem (DMCDIT$_{H\infty}$) can be reduced from the 0-1 knapsack problem.

Similar to (DIT$_{H\infty}$), observe that when the upgrade cost is fixed, it is always better to upgrade an edge to its upper bound $\bar{w}(e)$ in Equation (\ref{DIT_inf_ans}). Therefore, the decision version of (MCDIT$_{H\infty}$) is equivalent to the following problem, where $In_{\bar{w}}(e)$ represents the SRD increment when upgrading edge $e$ to $\bar{w}(e)$, $x(e)$ represents whether to upgrade edge $e$.
\begin{eqnarray*}
    & \max & 1 \\    \text{\textbf{(GMCDIT$_{H\infty}$)}} \; & \text{s.t.} & \sum_{e \in E} r(e)x(e) \leq N, \\
    && \sum_{e \in E} In_{\bar{w}}(e)x(e) \ge D, \\
    && \sum_{e \in P_k } (\bar{w}(e)-w(e)) x(e) \geq M -w(P_k), \forall k=1,\dots , m,\\
    && x(e) \in \{ 0,1\}. 
\end{eqnarray*}

Conversely, the decision version of the 0-1 knapsack problem can be formulated as follows: Given a constant $M_1$, find a feasible solution of 
\begin{equation}
\begin{aligned}
\max \quad & 1 \\
\text{s.t.} \quad & \sum_{i=1}^{n} c_{i} x_{i} \le R, \\
& \sum_{i=1}^{n} p_{i} x_{i} \ge M_1,\\ 
&x_{i}\in \{ 0,1 \}, \quad\forall i=1,\dots,n.
\end{aligned}
\label{decision:01knapsack}
\end{equation}
Then for any instance $I_1$ of form (\ref{decision:01knapsack}), consider a chain with root $s$, then $|L(e)|=1, e \in E$, by setting $r(e_i) =c_i, In_{\bar w}(e)=p_i, D=M_1, N=R$, instance $I_2$ of problem \textbf{(DMCDIT$_{H\infty}$)} is exacly instance $I_1$. Therefore the decision version of (MCDIT$_{H\infty}$) can be reduced from the 0-1 knapsack problem.

In conclusion, the (MCDIT$_{H\infty}$) problem is $\mathcal{NP}$-hard.
\end{proof}

\section{An pseudo-polynomial  time algorithm to solve problem (MCDIT$_{H\infty}$)}

For problem (MCDIT$_{H\infty}$), the objective function, which represents the maximum cost under the $l_\infty$ norm, is actually constrained within the interval $[K^1,K^2]$, where $K^1=0$, and $K^2= \max_{e \in E} c(e)(u(e)-w(e))$. Upon investigating the interconnection between problems (DIT$_{H\infty}$) and (MCDIT$_{H\infty}$), it is discovered that the latter can be addressed by sequentially solving (DIT$_{H\infty}$) to ascertain the minimum $K^*$ such that $\tilde w(T) > D$. Moreover, in problem (DIT$_{H\infty}$), it is observed that as the value of $K$ ascends, the optimal SRD value, $\tilde w(T)$ also exhibits a monotonic increase.

To find optimal $K^*$ within the defined interval $[K^1, K^2]$, a binary search Algorithm is employed. Each phase of the iteration leverages Algorithm \ref{Alg_DITH_inf} for the computation of $\tilde w(T)$, thus iteratively pinpointing $K^*$. Consequently, we have the following theorem.
\begin{theorem}
    Problem (MCDIT$_{H\infty}$) can be solved by Algorithm \ref{Alg_MCDITH_inf} within $O(nN^2 \log_2 K\log_2 U)$ time.
\end{theorem}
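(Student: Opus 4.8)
The plan is to mirror the structure of the proof of Theorem~\ref{thm:14}, replacing the bisection over $\lambda$ by a bisection over the cost budget on the interval $[K^1,K^2]=[0,\max_{e\in E}c(e)(u(e)-w(e))]$. First I would check that the optimal value $K^*$ of (MCDIT$_{H\infty}$) lies in this interval: no cost below $0$ is admissible, and at $K=K^2$ Equation~(\ref{DIT_inf_ans}) gives $\bar w(e)=u(e)$ for every edge, so if (MCDIT$_{H\infty}$) is feasible at all it is feasible at this budget; if not, the problem is declared infeasible.

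Next I would establish the monotonicity that legitimises the bisection. For a fixed budget $K$ put $\bar w_K(e):=\min\{w(e)+K/c(e),\,u(e)\}$; since $\bar w_K(e)$ is non-decreasing in $K$, the feasible set of the corresponding instance of (DIT$_{H\infty}$) is non-decreasing in $K$. Hence (i) feasibility of (DIT$_{H\infty}$), which by Theorem~\ref{Thm-DIT_Hinf} is equivalent to $h(s,1:|S(s)|,N,1)\ge M$, is a monotone predicate in $K$, and (ii) whenever it is feasible, its optimal value $\tilde w(T)$ (returned by Algorithm~\ref{Alg_DITH_inf} via Theorem~\ref{thm-lambda-13}) is non-decreasing in $K$, because an optimal solution for a smaller budget is feasible for a larger one. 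Consequently the set $\{K\in[K^1,K^2]:\text{(DIT}_{H\infty}\text{) is feasible and }\tilde w(T)\ge D\}$ is either empty or an interval $[K^*,K^2]$, and its left endpoint is exactly the optimal value of (MCDIT$_{H\infty}$). This is precisely the quantity Algorithm~\ref{Alg_MCDITH_inf} locates, testing the monotone predicate at each trial budget by one call to Algorithm~\ref{Alg_DITH_inf}.

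Then I would bound the number of bisection steps. Arguing as in Theorem~\ref{thm:14}, assume the data are integral. For a fixed optimal edge-selection pattern and a fixed critical $\lambda^*$, the resulting StRD and SRD are piecewise-linear rational functions of $K$ whose coefficients are assembled from the finitely many quantities $1/c(e)$ and $|L(e)|$; the finitely many breakpoints (edges reaching $u(e)$, changes of the optimal pattern, changes of $\lambda^*$), together with the critical budget $K^*$ at which the binding constraint $\tilde w(T)=D$ or some $\tilde w(P_k)=M$ first holds, are therefore rationals with polynomially bounded denominators, so any two distinct such critical values of $K$ are separated by an inverse-polynomial gap, in particular of order at least $1/K^2$ on a search interval of length $O(K)$. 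Hence $O(\log_2 K)$ bisection steps isolate $K^*$ to within a sub-breakpoint accuracy, after which one last call to Algorithm~\ref{Alg_DITH_inf} outputs the optimal solution.

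Finally I would assemble the running time: Algorithm~\ref{Alg_MCDITH_inf} performs $O(\log_2 K)$ iterations and, by Theorem~\ref{thm:14}, each call to Algorithm~\ref{Alg_DITH_inf} costs $O(nN^2\log_2 U)$, so the total is $O(nN^2\log_2 K\log_2 U)$; since $N$ depends on the encoding of the instance this is pseudo-polynomial. The main obstacle I anticipate is the precision estimate of the preceding paragraph: unlike the $\lambda$-argument of Theorem~\ref{thm:14}, here the budget $K$ enters the per-edge bounds $\bar w_K(e)$ through the reciprocals $1/c(e)$, so one must argue carefully that clearing denominators keeps the separating gap $\Omega(1/K^2)$, and that the outer bisection over $K$ composes cleanly with the inner bisection over $\lambda$ already performed inside Algorithm~\ref{Alg_DITH_inf} without an extra loss in the stated bound.
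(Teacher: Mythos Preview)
Your overall strategy---bisect over the cost budget $K\in[0,K^2]$, invoke Algorithm~\ref{Alg_DITH_inf} at each trial, and use monotonicity of the optimal SRD in $K$ to justify the bisection---is exactly the paper's approach, and your correctness discussion (monotonicity of $\bar w_K$, hence of feasibility and of $\tilde w(T)$) is in fact more careful than what the paper writes out.

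The one place you diverge is the precision argument in your third paragraph. The paper does not attempt any breakpoint-separation analysis for $K$; it simply says ``Suppose $K$ is integer'' and observes that Algorithm~\ref{Alg_MCDITH_inf} bisects until $K^2-K^1\le 1$, so $O(\log_2 K)$ iterations suffice. Your route---arguing that critical budgets are rationals with bounded denominators coming from the $1/c(e)$---would work in principle but is considerably harder to make rigorous (as you yourself flag), and it is unnecessary here: under the paper's standing integrality assumption the optimal $K^*$ is itself an integer value at which the predicate first flips, so halving down to unit width already isolates it. Dropping that paragraph and replacing it with the integrality remark gives exactly the paper's proof.
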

\begin{proof}
By Theorem \ref{thm:14}, Algorithm \ref{Alg_DITH_inf} operates in \(O(nN^2 \log_2 U)\) time. Suppose $K$ is integer, the binary search method requires at most \(O(\log_2 K)\) iterations. Consequently, Algorithm \ref{Alg_MCDITH_inf} has a time complexity of \(O(nN^2 \log_2 K \log_2 U)\), which qualifies it as a pseudo-polynomial time algorithm depending on \(N\).
\end{proof}

\begin{algorithm}
	\caption{A pseudo-polynomial  time algorithm to solve problem (MCDIT$_{H\infty}$)}
 \label{Alg_MCDITH_inf}
	\small
	\begin{algorithmic}[1]
		\Require A tree $T$ rooted at $s$, a given value $ N$, two edge weight vectors $w$ and $u$.
		\Ensure Minimal cost value $K^*$, a set of upgrade edges $E^*$ and the optimal solution $ \tilde{w}$.
            \State  Let $K^1:=0$ and $K^2:=\max_{e \in E} c(e)(u(e)-w(e))$. 
            \State Let $y$ the optimal SRD value of DIT$_{H\infty}$($T,N,K^2, w,u$)
            \If{$ y< D$}
            \State \Return No feasible solution.
            \Else
            \While{$K^2-K^1> 1$}
            \State Let $mid:= \frac{(K^2+K^1)}{2}$, $SRD^{mid}$ the SRD returned by DIT$_{H\infty}$($T,N,mid, w,u$).
            \If{$SRD^{mid}>D$ }
                \State \(K^2:=mid\)
                 \Else
                    \State \(K^1:= mid\)
                \EndIf
            \EndWhile
            \State \Return $K^2$, the upgrade edge set $E^*$ returned by DIT$_{H\infty}$($T,N,K^2, w,u$), the optimal solution $\tilde w$ returned by DIT$_{H\infty}$($T,N,K^2, w,u$).
            \EndIf
	\end{algorithmic}
\end{algorithm}

 \section{Numerical Experiments}
\subsection{One example to show the process of Algorithm \ref{Alg_DITH_inf}}
For the better understanding of Algorithm \ref{Alg_DITH_inf}, Example \ref{example_1} is given to show the detailed computing process.

\begin{example} \label{example_1}

    Let $V:=\left \{ s,v_1,\dots,v_{12} \right \}$, $E:=\left \{  e_1,\dots,e_{12}\right \}$, the corresponding $i,c,w,\bar w$ are shown on edges with different colors. Now we have $w(T):=106$, $u(T):=216$. Suppose the given values are $M:=30,K=320$ and $N:=3$, $r(e)=1, \forall e \in E$.  
    \begin{figure}[htbp!!]
    \centering
    \includegraphics[width=1.0\linewidth]{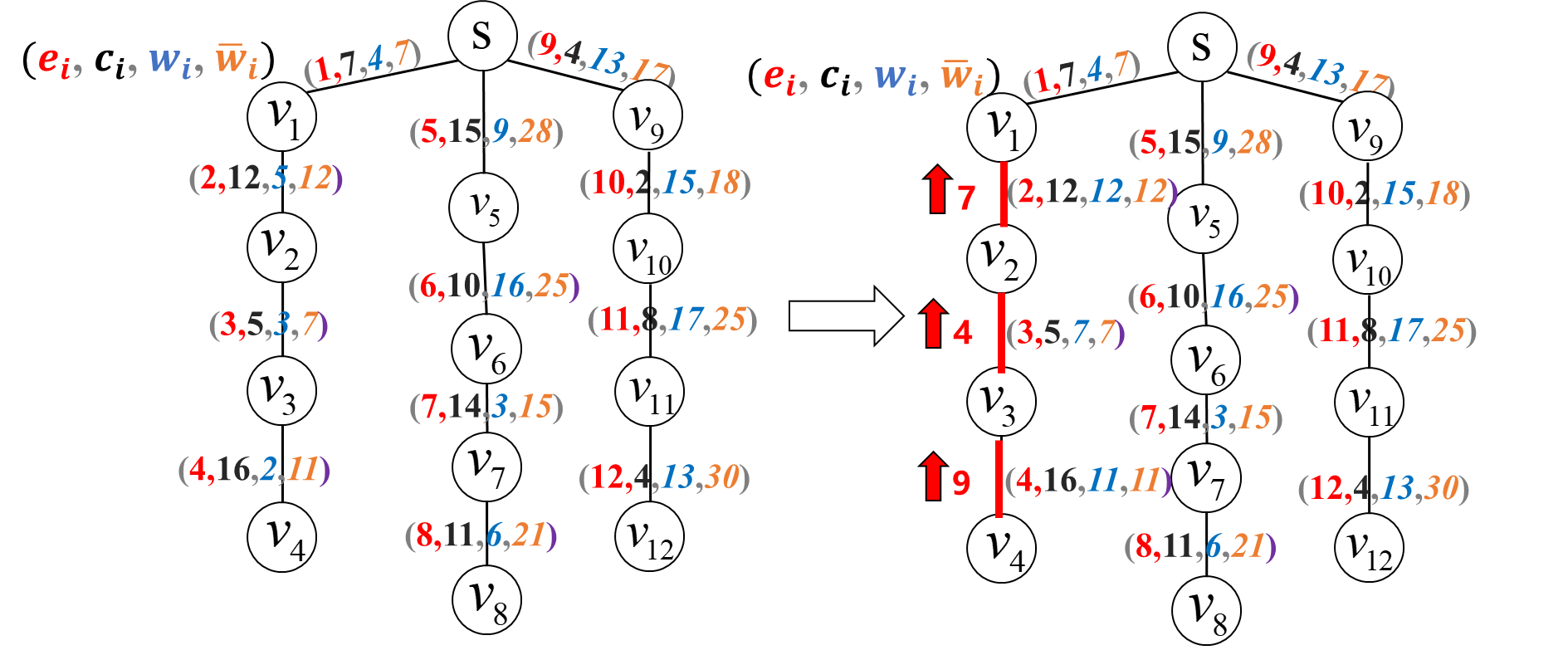}
    \caption{The left figure shows the weights $(e_i,c_i,w_i, \bar w_i)$. The right figure shows the upgraded edges in Alg. \ref{Alg_DITH_inf} when $\lambda=1$.}
    \label{example_1_fig}
\end{figure}
When $\lambda=1$, the algorithm aims to max StRD as shown in Fig.\ref{example_1_fig} where all upgrade edges are on path $(s,v_4)$. Table \ref{tab:1} shows that only on the shortest path h have positive values.
\begin{table}[htbp!!]
\centering
\label{tab:1}
\caption{Values of $h$ value for \( \lambda = 1 \)}
\begin{tabular}{|c|c|c|c|}
\hline
\( h \)           & Value & $h$ & Value \\ \hline
\( h(v_3, 1:1,1) \) & 11  & \( h(s, 1:1,1) \)   & 23 \\
\( h(v_2, 1:1,1) \) & 14  & \( h(s, 1:1,2) \)   & 30  \\
\( h(v_2, 1:1,2) \) & 18  & \( h(s, 1:1,3) \)   & 34 \\
\( h(v_1,1:1,1) \) & 19  & \( h(s, 1:2,3) \)   & 34 \\
\( h(v_1, 1:1,2) \) & 26  & \( h(s, 3:3,3) \)   & 0  \\
\( h(v_1, 1:1,3) \) & 30  & \( h(s, 1:3,3) \)   & 34  \\

\hline
\end{tabular}
\end{table}
In $\hat w_{(s,1:3,3,1)}$, SP$_{(s,1:3,3,1)}=34 \ge M=30$, so the problem is feasible.
    
    \begin{figure}[htbp!!]
    \centering
    \includegraphics[width=1.0\linewidth]{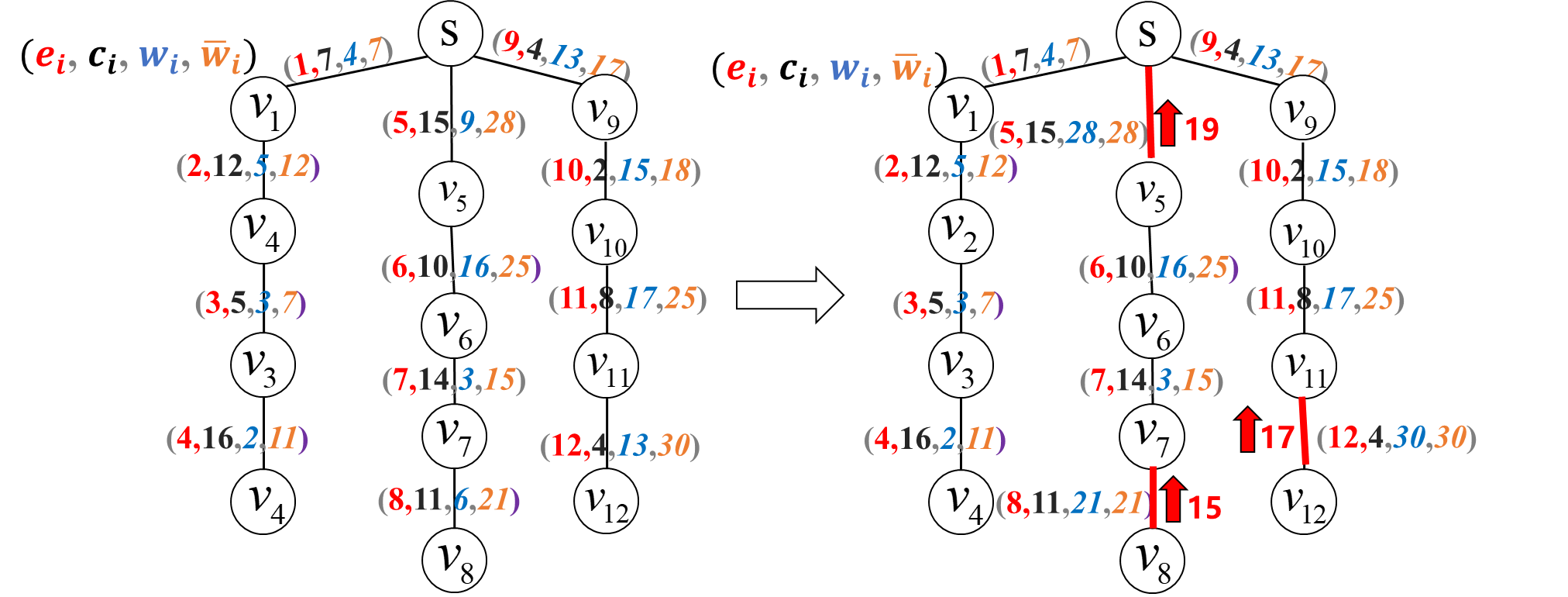}
    \caption{The left figure shows the weights $(e_i,c_i,w_i, \bar w_i)$. The right figure shows the upgraded edges in Alg. \ref{Alg_DITH_inf} when $\lambda=0$.}
    \label{example_2_fig}
\end{figure}

Conversely, when $\lambda=0$, the algorithm maximizes SRD, as shown in Fig.\ref{example_2_fig}. In $\hat w_{(s,1:3,3,0)}$, the edges with larger SRD increment are chosen. Table \ref{tab:2}  shows that the algorithm maximizes h in all feasible solution.
\begin{table}[htbp!!]
\centering
\caption{Values of $h$ value for \( \lambda = 0 \)}
\begin{tabular}{|c|c|c|c|}
\hline
\( h \)           & Value & $h$ & Value \\ \hline
\( h(v_3, 1:1,1) \) & 11  & \( h(s, 1:1,1) \)   & 23 \\
\( h(v_2, 1:1,1) \) & 14  & \( h(s, 1:1,2) \)   & 30  \\
\( h(v_2, 1:1,2) \) & 18  & \( h(s, 1:1,3) \)   & 34 \\
\( h(v_1,1:1,1) \) & 19  & \( h(s, 1:2,3) \)   & 94 \\
\( h(v_1, 1:1,2) \) & 26  & \( h(s, 3:3,3) \)   & 87  \\
\( h(v_1, 1:1,3) \) & 30  & \( h(s, 1:3,3) \)   & 157  \\
\hline
\end{tabular}
\label{tab:2}
\end{table}

By adjusting $\lambda$, different weight vector are generated. In the end, when $\lambda=0.75$, the algorithm terminates with the optimal value $\tilde w(T)=141$ in Fig.\ref{example_3_fig} and Table \ref{tab:3}.

\begin{table}[htbp!!]
\centering
\caption{Values of $h$ value for \( \lambda = 0.75 \)}
\begin{tabular}{|c|c|c|c|}
\hline
\( h \)           & Value & $h$ & Value \\ \hline
\( h(v_3, 1:1,1) \) & 11  & \( h(s, 1:1,1) \)   & 23 \\
\( h(v_2, 1:1,1) \) & 14  & \( h(s, 1:1,2) \)   & 30  \\
\( h(v_2, 1:1,2) \) & 18  & \( h(s, 1:1,3) \)   & 34 \\
\( h(v_1,1:1,1) \) & 19  & \( h(s, 1:2,3) \)   & 43.25 \\
\( h(v_1, 1:1,2) \) & 26  & \( h(s, 3:3,3) \)   & 21.75  \\
\( h(v_1, 1:1,3) \) & 30  & \textcolor{red}{\( h(s, 1:3,3) \)}  & \textcolor{red}{57.75}  \\
\hline
\end{tabular}
\label{tab:3}
\end{table}

\begin{figure}[htbp!!]
    \centering
    \includegraphics[width=1.0\linewidth]{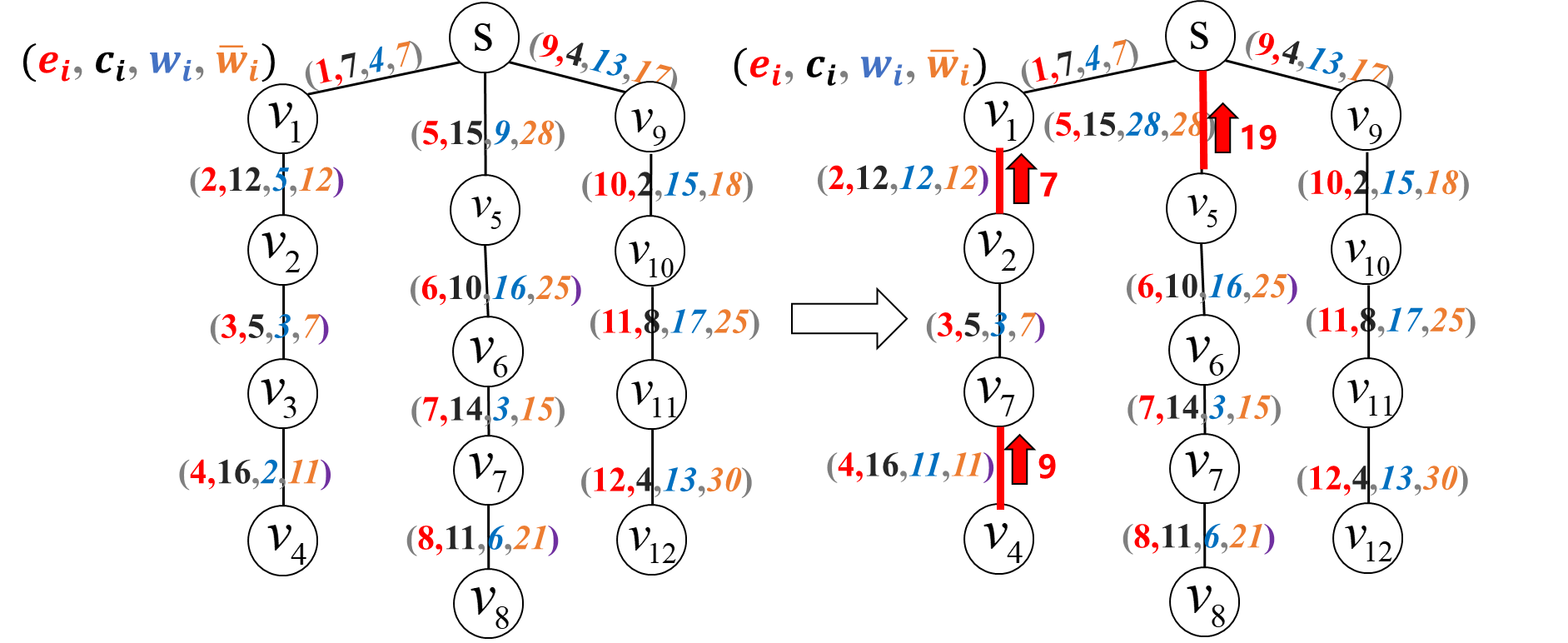}
    \caption{The left figure shows the weights $(e_i,c_i,w_i, \bar w_i)$. The right figure shows the upgraded edges in Alg. \ref{Alg_DITH_inf} when $\lambda=0.75$.}
    \label{example_3_fig}
\end{figure}
    
\end{example}

\subsection{Computational experiments}
We present the numerical experimental results for Algorithms \ref{Alg_DITH_inf_N1}, \ref{Alg_PIC_inf}, \ref{Alg_DITH_inf},
\ref{Alg_MCDITH_inf} in Table \ref{table:ex}.
These programs were coded in Matlab2023a and ran on an Intel(R) Core(TM) i7-10875H CPU @ 2.30GHz and 2.30 GHz machine running Windows 11. We tested these algorithms on six randomly generated trees with vertex numbers ranging from 10 to 500.
We randomly generated the vectors $u$, $c$ and $w$ such that $0\leq w \leq u$ and $c > 0$.
We generated $K$, $D$  and
$N$ for each tree based on $n$ with randomness, respectively.
In this table the average, maximum and minimum CPU time are denoted by $T_i$, $T_i^{max}$ and $T_i^{min}$, respectively, where $i = {1, \cdots, 4}$ represents Algorithms \ref{Alg_DITH_inf_N1}, \ref{Alg_PIC_inf}, \ref{Alg_DITH_inf},
\ref{Alg_MCDITH_inf}, respectively. 

\begin{table}[!htbp] 
	\centering
	\caption{Performance of Algorithms}
	\label{table:ex} 
	\begin{tabular}{ccccccc}\hline 
		
		\renewcommand{\arraystretch}{1.5pt}
		\renewcommand{\tabcolsep}{5pt}
		Complexity& $n$ &10 &50 &100 &300 &500 \\ 
            \hline
		
		$O(n)$ & $T_1$ &0.0002 & 0.0009 &0.00013 &0.0040 &0.0103 \\
		&  $T_1^{max}$ & 0.0004 &0.0015 &0.0034 &0.0324 &0.0177 \\
		&  $T_1^{min}$ & 0.0001 &0.0004 & 0.0010 &0.0027 &0.0051 \\
			\hline	
            $O(nN^2)$ & $T_2$ &0.0008 &0.0205 &0.0834 &0.7281 &2.3140 \\
            & $T_2^{max}$ &0.0020 &0.0274 &0.1521 &0.9820 &2.7254 \\
            & $T_2^{min}$ &0.0005 &0.0170 &0.0654 &0.6471 &1.7850 \\		
		\hline
            \(O(nN^2 \log_2 U)\) & $T_3$ &0.0010 &0.0531 &0.2487 &3.1651 &10.9810 \\
            & $T_3^{max}$ &0.0022 &0.0630 &0.2750 &5.6050 &14.0052 \\
            & $T_3^{min}$ &0.0005 &0.0420 &0.1630 &2.1240 &8.5100 \\

            \hline
            $O(nN^2 \log_2 K\log_2 U)$ & $T_4$ &0.0046 &0.2266 &0.9446 &15.5903 &48.5955 \\
            & $T_4^{max}$ &0.0110 &0.3234 &1.2330 &18.4107 &63.2560 \\
            & $T_4^{min}$ &0.0027 &0.2116 &0.8632 &12.4260 &43.4700 \\
		\hline
	\end{tabular}
\end{table}

From Table \ref{table:ex}, we can see that Algorithm \ref{Alg_MCDITH_inf} is time-consuming due to the repeatedly calling of Algorithm  \ref{Alg_DITH_inf} in pseudo-polynomial  time and the uncertainty of its iteration number.

Overall, these algorithms are all very effective and follow their respective time complexities well. When $n$ is small, the time differences among the three algorithms are relatively small, but as $n$ increases, the differences between the algorithms become more pronounced.

\section{Conclusion and Future Research}

This paper delves into the intricate dynamics of the double interdiction problem on the sum of root-leaf distance on trees, with a primary focus on maximizing SRD through edge weight adjustments within cost limitations and minimum path length requirements. By establishing parallels with the 0-1 kapsack problem, it illustrates the $\mathcal{NP}$-hardness of problem (DIT$_{H\infty}$). In addressing scenarios where a single upgrade is permissible, a pragmatic greedy algorithm is proposed to mitigate complexity. For situations necessitating multiple upgrades, an pseudo-polynomial time dynamic programming algorithm is advocated, striking a delicate balance between shortest path considerations and the summation of root-leaf distances. Specifically, when the weighted sum type is replaced by a cardinality constraint, the algorithm becomes a polynomial time algorithm.

Moreover, the paper ventures into the realm of the related minimum cost problem (MCDIT$_{H\infty}$), demonstrating its $\mathcal{NP}$-hardness through a reduction from the 0-1 knapsack problem. Subsequently, it outlines a binary search methodology to tackle the minimum cost predicament, culminating in a series of numerical experiments that vividly showcase the efficacy of the algorithms presented.

For further research, a promising avenue lies in extending similar methodologies to interdiction problems concerning source-sink path lengths, maximum flow, and minimum spanning trees, employing diverse metrics and measurements across general graphs. Such endeavors hold the potential to deepen our understanding and broaden the applicability of interdiction strategies in various real-world contexts.

\vskip 0.5cm
{\small \textbf{Funding} The Funding was provided by National Natural Science Foundation of China (grant no: 11471073).

\vskip 0.3cm
\textbf{Data availability}
 Data sharing is not applicable to this article as our datasets were generated randomly.

\section*{Declarations}

\textbf{Competing interests} The authors declare that they have no competing interest.

}

\end{document}